\theoremstyle{plain}
\newtheorem{thm}{Theorem}[section]
\newtheorem{mth}{Theorem}
\newtheorem{lem}[thm]{Lemma}
\theoremstyle{remark}
\newtheorem*{rem}{Remark}
\numberwithin{equation}{section}
\newcommand{\sample}{\{\zeta_j\}_1^n}
\newcommand{\propco}{s}
\newcommand{\param}{h_*}
\newcommand{\an}{a_0}
\newcommand{\pa}{h_0}
\newcommand{\Qeff}{Q^{\,\mathrm{eff}}}
\newcommand{\bfR}{\mathbf{R}}
\newcommand{\calF}{{\mathcal F}}
\newcommand{\calW}{{\mathscr W}}
\newcommand{\bfE}{{\mathbf E}}
\newcommand{\bfP}{{\mathbf P}}
\newcommand{\Int}{\operatorname{Int}}
\newcommand{\R}{{\mathbb R}}
\newcommand{\C}{{\mathbb C}}
\newcommand{\fii}{{\varphi}}
\newcommand{\eps}{{\varepsilon}}
\newcommand{\Ham}{H}
\renewcommand{\d}{{\partial}}
\newcommand{\dbar}{\bar{\partial}}
\newcommand{\1}{\mathbf{1}}
\newcommand{\dist}{\operatorname{dist}}
\newcommand{\supp}{\operatorname{supp}}
\newcommand{\Lap}{\Delta}
\def\ie{\textit{i.e.}}
\def\viz{\textit{viz.}}
\begin{document}

\subjclass[2010]{60K35}

\keywords{Coulomb gas; external potential; droplet; localization}

\title[Localizing the Coulomb gas]{A localization theorem for the planar Coulomb gas in an external field}

\begin{abstract} We examine a two-dimensional Coulomb gas consisting of $n$ identical repelling point charges at an arbitrary inverse temperature $\beta$, subjected to a suitable external field.

We prove that the gas is effectively localized to a small neighbourhood of the droplet -- the support of the equilibrium measure determined by the external field. More precisely, we prove that the distance between the droplet and the vacuum is with very high probability at most proportional to $$\sqrt{\dfrac {\log n}{\beta n}}.$$ This order of magnitude is known to be ``tight'' when $\beta=1$ and the external field is radially symmetric.

In addition, we prove estimates for the one-point function in a neighbourhood of the droplet, proving in particular a fast uniform decay as one moves
beyond a distance roughly of the order $\sqrt{\tfrac{\log n}{\beta n}}$ from the droplet.
\end{abstract}

\author{Yacin Ameur}

\address{Yacin Ameur\\
Department of Mathematics\\
Faculty of Science\\
Lund University\\
P.O. BOX 118\\
221 00 Lund\\
Sweden}

\email{Yacin.Ameur@math.lu.se}

\maketitle

\section{Introduction and Main Results} \label{mainr}



 The planar Coulomb gas is a random configuration consisting of many (but finitely many) identical repelling point charges $\{\zeta_i\}_1^n$ in $\C$.

 To keep the system from dispersing to infinity we assume the presence of an external field $nQ$ where $Q$ is a suitable
extended real-valued function defined on $\C$, large near infinity in the sense that
\begin{equation}\label{gro}\liminf_{\zeta\to\infty}\frac {Q(\zeta)}{2\log|\zeta|}>1.\end{equation}

 The function $Q$, which is called an external potential, is fairly general but not quite arbitrary; precise
assumptions are given below.

\smallskip

To a planar configuration $\{\zeta_j\}_1^n$ we associate the Hamiltonian (or energy)
\begin{equation}\label{hami}H_n=\sum_{j\ne k}^n\log\frac 1 {|\zeta_j-\zeta_k|}+n\sum_{j=1}^n Q(\zeta_j).\end{equation}

\smallskip

The statistical model is completed by fixing an inverse temperature $\beta=1/k_B T$ and considering $\{\zeta_j\}_1^n$ as a random sample with respect to the Gibbs measure 
\begin{equation}\label{gibb}d\bfP_n^\beta=\frac 1 {Z_n^\beta}\, e^{-\beta H_n}\,dA_n.\end{equation}

Here and throughout we write $dA$ for the Lebesgue measure in $\C$ divided by $\pi$ and
$dA_n(\zeta_1,\ldots,\zeta_n)=dA(\zeta_1)\cdots dA(\zeta_n)$ for the corresponding product measure, where
the constant $Z_n^\beta$ in \eqref{gibb} is the usual partition function,
\begin{equation}\label{pf}Z_n^\beta=\int_{\C^n} e^{-\beta H_n}\, dA_n.
\end{equation}

\smallskip

We shall now gradually become explicit about our precise assumptions, introducing simultaneously our basic objects of study.

\begin{enumerate}[label=(\roman*)]
\item\label{A0}
 The external potential $Q$ is assumed to be a fixed lower semi-continuous function on $\C$ with values in
$\R\cup\{+\infty\}$ such that the set $\Sigma:=\{Q<+\infty\}$ has non-empty interior $\Int\Sigma$. We also suppose that $Q$ is $C^2$-smooth in $\Int\Sigma$,
and that $Q$ obeys the growth condition \eqref{gro}.
\end{enumerate}



We next define the \textit{logarithmic $Q$-energy} of a finite, compactly supported Borel measure $\mu$ on $\C$ by
$$I_Q[\mu]=\iint_{\C^2}\log\frac {1} {|\zeta-\eta|}\,d\mu(\zeta)d\mu(\eta)+\mu(Q),$$
where ``$\mu(Q)$'' is shorthand for $\int_\C Q\, d\mu$.

It is shown in \cite{ST} that
there is a unique \textit{equilibrium measure} $\sigma$ of total mass $1$, which minimizes $I_Q[\mu]$ over all compactly supported Borel probability measures $\mu$. The support of $\sigma$, which we denote by the symbol
$$S=S[Q]:=\supp\sigma$$ is called the \textit{droplet} in external potential $Q$; we stress that this is a compact set. It is convenient to make the following assumption.

\begin{enumerate}[label=(\roman*)]
\setcounter{enumi}{1}
\item\label{A} We assume that the droplet $S=S[Q]$ is contained in $\Int\Sigma$ where $\Sigma=\{Q<+\infty\}$.
\end{enumerate}

By \ref{A0} and \ref{A} it follows that $Q$ is smooth in a neighbourhood of $S$. It is well-known (see \cite{ST}) that in this circumstance, the equilibrium measure $\sigma$ is absolutely continuous with respect to $dA$ and takes the form
\begin{equation}\label{cont}d\sigma=\Lap Q\cdot \1_S\, dA.\end{equation}
Here and in what follows we normalize the Laplacian by
$$\Lap Q:=\d\dbar Q=\frac 1 4(Q_{xx}+Q_{yy}),\qquad (z=x+iy).$$

Observe that since $\sigma$ is a probability measure, $Q$ is necessarily subharmonic on the support $S$.

\medskip

We now list additional conditions which we always assume to be satisfied in the sequel.

\begin{enumerate}[label=(\roman*)]
\setcounter{enumi}{2}
\item\label{B} The potential $Q$ is strictly subharmonic in a neighbourhood of the boundary $\d S$.
\item\label{C} The boundary $\d S$ has finitely many components.
\item\label{D} Each component of $\d S$ is an everywhere $C^1$-smooth Jordan curve.
\item\label{E} $S^*=S$ where $S^*$ is the coincidence set for the obstacle problem, given in Section \ref{Sec2}.
\end{enumerate}

\smallskip

Some of these conditions are assumed merely for convenience and may be relaxed. The practically minded reader may note that the class of potentials which are real-analytic where finite typically works well, and sufficiently small smooth perturbations of such potentials are also manageable. A detailed discussion of this situation is given in Subsection \ref{pertpot}.

\medskip

We will denote the Euclidean distance between a point $\zeta\in\C$ and the compact set $S$ by the symbol
$$\delta(\zeta)=\dist(\zeta,S):=\min\{|\zeta-\eta|;\, \eta\in S\}.$$

Let $\{\zeta_j\}_1^n$ be a random sample from \eqref{gibb}.
If $W$ is a measurable subset of $\C$ we will denote by $\#(W\cap\sample)$ the number of indices $j$ such that $\zeta_j\in W$. By definition, the
\textit{one-point function} of $\{\zeta_j\}_1^n$ is
$$\bfR_n^\beta(\zeta)=\lim_{\eps\to 0}\frac {\bfE_n^\beta[\# (D(\zeta;\eps)\cap\sample)]}{\eps^2},$$
where $D(\zeta;\eps)$ is the open disc $\{\eta;\, |\zeta-\eta|<\eps\}$.

We next define a positive constant $c_0=c_0[Q]$ by
\begin{equation}\label{cdef}c_0:=\min\{\Lap Q(\eta);\,\eta\in \d S\}.\end{equation}

Given these preliminaries, we have the following theorem.

\begin{mth}\label{DTH} Let $Q$ be a potential satisfying assumptions \ref{A0}-\ref{E}. There exists an open neighbourhood $\Omega$ of $S$ and constant $C>1$ such that for all $n\ge 1$
\begin{equation}\label{decay}\bfR_n^\beta(\zeta)\le C^\beta n^2 e^{-c\beta n\cdot \delta(\zeta)^2},\qquad \zeta\in \Omega.\end{equation}
Here $c$ can be taken to be any positive constant with $c<c_0$.
\end{mth}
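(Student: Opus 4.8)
The plan is to estimate $\bfR_n^\beta(\zeta)$ pointwise for $\zeta$ in a fixed neighbourhood $\Omega$ of $S$ via a standard comparison argument: conditioning on the position of a single particle. Writing $\bfR_n^\beta(\zeta)$ as an $(n-1)$-fold integral, one sees that
$$\bfR_n^\beta(\zeta) = n\,e^{-\beta n Q(\zeta)}\,\frac{Z_{n-1}^\beta[\text{modified potential}]}{Z_n^\beta}\cdot(\text{combinatorial factor}),$$
where the modified potential absorbs the interaction of the fixed charge at $\zeta$ with the remaining $n-1$ charges. The ratio of partition functions is the quantity that must be controlled; the natural route is to bound it in terms of the equilibrium energy, so that the exponential rate becomes governed by $Q(\zeta) - \check Q(\zeta)$, where $\check Q$ is the obstacle-problem solution from Section \ref{Sec2}. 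The key analytic input is that near $\d S$ one has the quadratic growth estimate $Q(\zeta) - \check Q(\zeta) \ge c\,\delta(\zeta)^2$ for any $c < c_0$, which follows from assumption \ref{B} (strict subharmonicity near $\d S$, giving $\Lap Q \ge c_0$ there) together with the $C^1$-smoothness of $\d S$ in \ref{D} and the fact that $\check Q$ is harmonic off $S$ and agrees with $Q$ to first order on $\d S$. A Taylor expansion of $Q - \check Q$ in the normal direction, using that this difference and its gradient vanish on $\d S$ while its Laplacian is $\ge c_0$ just inside, yields the quadratic lower bound on a sufficiently thin collar, and one shrinks $\Omega$ to fit inside this collar.

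Concretely, the first step is to set up the single-particle conditioning and reduce \eqref{decay} to a bound of the form $\bfR_n^\beta(\zeta) \le C^\beta n^2 e^{-\beta n (Q(\zeta) - \check Q(\zeta))}$ valid on $\Omega$, with the $n^2$ (rather than $n$) prefactor giving us room to be wasteful. To get this I would compare the conditional partition function $Z_{n-1}^\beta$ with the fixed charge present against the unconditional $Z_n^\beta$: roughly, inserting a charge at $\zeta$ and then relaxing costs an energy at least $n(Q(\zeta) - \check Q(\zeta))$ up to lower-order corrections, because $\check Q(\zeta) = U^\sigma(\zeta) + (\text{Robin const.})$ bounds the logarithmic potential felt by the new charge. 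This is the kind of estimate that appears in the existing literature on Ward identities and fluctuations for Coulomb gases (e.g. via the known asymptotics $\log Z_n^\beta = -\beta n^2 I_Q[\sigma] + O(n\log n)$, though one must be careful that such asymptotics are themselves sometimes what one is trying to prove — so a self-contained sub-exponential comparison of the two partition functions via a trial-measure / energy-minimization argument is safer). The second step is the purely deterministic potential-theoretic lemma giving $Q - \check Q \ge c\,\delta^2$ near $\d S$; this is where assumptions \ref{B}--\ref{E} are used, and \ref{E} ($S^* = S$) ensures $\check Q = Q$ on all of $S$ so that $\delta(\zeta)$ is genuinely the relevant length scale and there are no spurious components of the coincidence set.

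I expect the main obstacle to be obtaining the partition-function ratio bound with an error that is genuinely subexponential in $n$ and uniform in $\zeta \in \Omega$ and in $\beta$ — in particular, tracking the $\beta$-dependence so that the constant in front comes out as $C^\beta$ rather than something worse. The difficulty is that the naive insertion bound produces the interaction of the new charge with the empirical measure of the gas, and one needs to replace the (random) empirical measure by the equilibrium measure $\sigma$ at the cost of a controlled error; making this rigorous requires either an a priori concentration estimate for the empirical field or a clever energy-rearrangement inequality. A clean way around this is to bound $Z_{n-1}^\beta(\text{conditioned on } \zeta)/Z_n^\beta$ from above by evaluating the conditioned integral against a trial configuration and using subharmonicity/the obstacle inequality $\check Q \le Q$ together with the maximum principle, which avoids any probabilistic input; the price is some care near $\d\Omega$ and at infinity, handled by the growth condition \eqref{gro}. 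Once both ingredients are in place the theorem follows by combining them and choosing $\Omega$ thin enough that the quadratic estimate holds throughout.
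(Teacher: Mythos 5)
Your deterministic ingredient is exactly the paper's Lemma \ref{lem1}: on a thin collar around $\d S$ one has $\Qeff=Q-\check Q\ge 2c\,\delta(\zeta)^2$ for any $c<c_0$, by a Taylor expansion in the normal direction using that $\Qeff$ and its gradient vanish on $\d S$ while $\Lap Q\ge c_0$ nearby, and assumption \ref{E} supplies a strictly positive lower bound off the collar. That half of your plan is sound and matches the paper.

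The probabilistic half has a genuine gap, and it is the one you yourself flag as ``the main obstacle''. Conditioning on one particle forces you to control, with multiplicative error at most $C^\beta n^{O(1)}$, both a ratio of partition functions and the exponential moment $\bfE_{n-1}^\beta\bigl[\exp\bigl(2\beta\sum_k\log|\zeta-\zeta_k|\bigr)\bigr]$. The error budget is extremely tight: the bound $C^\beta n^2e^{-c\beta n\delta(\zeta)^2}$ only carries information at the scale $\delta(\zeta)^2\asymp\frac{\log n}{\beta n}$, where the exponential factor is merely polynomially small in $n$, so any loss of size $e^{\eps\beta n}$ --- let alone the $e^{O(n\log n)}$ loss in the asymptotics $\log Z_n^\beta=-\beta n^2I_Q[\sigma]+O(n\log n)$ that you cite --- destroys the estimate. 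Replacing the empirical measure by $\sigma$ inside an exponential moment cannot be done by weak convergence or by Jensen (which goes the wrong way), and the test function $\log|\zeta-\cdot|$ is unbounded; your fallback of evaluating against a trial configuration produces lower bounds on partition functions, whereas the dangerous direction here is an upper bound on the conditioned integral. No concrete mechanism achieving the required $C^\beta n^{O(1)}$ precision is given.

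The paper sidesteps partition functions entirely. The weighted Lagrange polynomial $\ell_j$ of \eqref{lag} satisfies the exact algebraic identity $|\ell_j(\zeta)|^{2\beta}e^{-\beta H_n(\ldots,\zeta_j,\ldots)}=e^{-\beta H_n(\ldots,\zeta,\ldots)}$, which upon integration (Lemma \ref{pl3} and its iterates) yields error-free identities such as $\bfE_n^\beta[\1_U(\zeta_j)\int_W|\ell_j|^{2\beta}\,dA]=|U|\cdot\bfP_n^\beta(\{\zeta_1\in W\})$. Combining these with the deterministic chain $|\ell_j(\zeta)|^{2\beta}\le ne^{\,s\beta}\,e^{-n\beta\Qeff(\zeta)}\int_\C|\ell_j|^{2\beta}\,dA$ --- sub-mean-value inequality (Lemma \ref{pl2}) on $S$, then the weighted maximum principle (Lemma \ref{lem2}), then Lemma \ref{lem1} --- gives $\bfP_n^\beta(\{\zeta_1\in W\})\le ne^{\,s\beta}e^{-c\beta n\delta^2}|W|$ with the only loss being the factor $ne^{\,s\beta}$. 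Note that the quantity $\prod_k|\zeta-\zeta_k|^2e^{-nQ(\zeta)}$ in your conditioned integral is $|q(\zeta)|^2e^{-nQ(\zeta)}$ with $q$ of degree $n-1$, and the only tool that bounds it by $e^{-n\Qeff(\zeta)}$ times its supremum on $S$ without probabilistic input is that same weighted maximum principle; to close your argument you would essentially have to rediscover the Lagrange-polynomial identity that converts this into an exact expectation.
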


The exterior decay given in \eqref{decay} comes close to known exact results for $\beta=1$ as well as projected (or ``physical'') results for arbitrary $\beta$ in
\cite{CFTW}.

We note that the uniform bound $\bfR_n^{\,\beta}(\zeta)\le C^\beta n^2$ of Theorem \ref{DTH} can be improved in the determinantal case $\beta=1$ to $\bfR_n^{\,1}\le Cn$ (e.g. \cite[Section 3]{AKM}). We are not aware of similar uniform bounds for
$\beta\ne 1$.

\smallskip

Associated to a random sample $\{\zeta_j\}_1^n$ we now define the number
\begin{equation}\label{Dn}D_n:=\max_{1\le j\le n}\{\delta(\zeta_j)\}.\end{equation}
We regard $D_n$ as a random variable with respect to the Gibbs measure, which thus represents the \textit{distance from the droplet to the vacuum}.

\begin{mth}\label{TH1} Assume that the external potential $Q$ satisfies assumptions \ref{A0}-\ref{E}.

Let $\beta=\beta_n$ be a possibly $n$-dependent inverse temperature which is not too small in the sense that
\begin{equation}\label{ass0}\lim_{n\to\infty}\frac {
\beta_n n}{\log n}
=\infty.\end{equation}
Then there exists
 a sequence $\mu=\mu_n$ of positive numbers
with \begin{equation}\label{tess}\mu_n\lesssim \log\log n+\beta_n,\end{equation} and constants $c>0$ and $a>0$, such that for each real $t$ satisfying $$t\le a\beta_n n$$ we have the estimate
\begin{equation}\label{jes}\bfP_n^\beta\left(\left\{D_n>\sqrt{\frac {\log n+\mu_n+t} {c\beta_n n}}\right\}\right)\le e^{-2t}.\end{equation}
Here $c$ and $a$ depend only on $Q$; $c$ can be taken as any positive number with $c<c_0$, where $c_0$ is given in \eqref{cdef}.
\end{mth}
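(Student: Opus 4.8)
The plan is to deduce Theorem \ref{TH1} from the pointwise one-point estimate in Theorem \ref{DTH} by a union bound combined with a careful integration of the tail. First I would fix $c<c_0$ and take the neighbourhood $\Omega$ and constant $C$ from Theorem \ref{DTH}, so that $\bfR_n^\beta(\zeta)\le C^\beta n^2 e^{-c\beta n\,\delta(\zeta)^2}$ on $\Omega$. For a radius $\rho>0$ to be chosen, the event $\{D_n>\rho\}$ is contained in the event that at least one $\zeta_j$ lies in the region $\{\zeta:\delta(\zeta)>\rho\}$. By Markov's inequality applied to the counting statistic,
\begin{equation}\label{markovbd}
\bfP_n^\beta(\{D_n>\rho\})\le \bfE_n^\beta\big[\#(\{\delta>\rho\}\cap\sample)\big]=\int_{\{\delta(\zeta)>\rho\}}\bfR_n^\beta(\zeta)\,dA(\zeta).
\end{equation}
The integral splits into the part inside $\Omega$, where Theorem \ref{DTH} applies, and the part outside $\Omega$. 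The tail outside $\Omega$ should be handled separately and is expected to be super-exponentially small in $n$ for all $\rho$ bounded away from $0$; I would invoke the standard exponential concentration of the empirical measure near the droplet (equivalently, a crude large-deviations bound on $\bfR_n^\beta$ away from $S$, which follows from the growth assumption \eqref{gro} and the fact that $Q$ strictly exceeds its obstacle-problem regularization off $S$), so that this contribution is absorbed into the error term $\mu_n$.

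For the main part, on $\{\zeta\in\Omega:\delta(\zeta)>\rho\}$ I would integrate the Gaussian-type bound in polar-like coordinates adapted to $\d S$. Writing $\delta=\delta(\zeta)$ and using that the area of $\{\zeta\in\Omega:s<\delta(\zeta)<s+ds\}$ is $O(ds)$ uniformly (here assumptions \ref{C}, \ref{D} on the smoothness and finiteness of components of $\d S$ enter, guaranteeing the distance function is well-behaved and the ``tube'' around $\d S$ has bounded perimeter), one gets
\begin{equation}\label{tubeint}
\int_{\{\delta>\rho\}\cap\Omega}\bfR_n^\beta\,dA\lesssim C^\beta n^2\int_\rho^{\infty} e^{-c\beta n s^2}\,ds\lesssim C^\beta n^2\,\frac{1}{\sqrt{\beta n}}\,e^{-c\beta n\rho^2}.
\end{equation}
Now choose $\rho^2=\dfrac{\log n+\mu_n+t}{c\beta_n n}$. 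Then $e^{-c\beta n\rho^2}=e^{-(\log n+\mu_n+t)}=n^{-1}e^{-\mu_n}e^{-t}$, so the right-hand side of \eqref{tubeint} becomes $\lesssim C^\beta n^2\cdot n^{-1/2}\cdot n^{-1} e^{-\mu_n}e^{-t}=C^\beta n^{1/2}e^{-\mu_n}e^{-t}$. Choosing $\mu_n$ so that $e^{\mu_n}$ dominates $C^\beta n^{1/2}$ times a further factor $e^{t}$ — concretely $\mu_n\gtrsim \beta\log C+\tfrac12\log n$ is more than enough, but to reach the sharp form \eqref{tess} one must be more economical: only $\mu_n\lesssim \log\log n+\beta$ is allowed. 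The extra logarithmic room comes from the observation that the $n^{1/2}$ prefactor is wasteful — one should instead bound $\bfR_n^\beta$ more carefully near the transition region, or, more simply, re-examine: $C^\beta n^{1/2}e^{-\mu_n}\le \tfrac12 e^{-t}$ only requires $\mu_n\ge \beta\log C+\tfrac12\log n+t+\log 2$, which is linear in $\log n$, not $\log\log n$. So a genuine improvement is needed.

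The genuine improvement, and the step I expect to be the main obstacle, is to avoid losing a full power of $n$ by working not with the crude pointwise bound but with a sharper estimate that is essentially saturated only in a thin shell of width $O(1/\sqrt{\beta n})$ around $\d S$. The idea is to absorb the polynomial-in-$n$ prefactor by shifting $\rho$ by a term of order $\sqrt{\mu_n/(\beta n)}$ with $\mu_n$ of the claimed size $\log\log n+\beta$: one shows that already for $\rho_0$ of order $\sqrt{\log n/(\beta n)}$ (which is legitimate by \eqref{ass0}, so $\rho_0\to 0$ and stays in $\Omega$), the bound \eqref{tubeint} gives $\bfP_n^\beta(D_n>\rho_0)\le n^{-K}$ for any $K$ — and then a \emph{relative} estimate, integrating $e^{-c\beta n s^2}$ from $\rho_0+\Delta$ onwards against the tube measure, yields the factor $e^{-c\beta n(2\rho_0\Delta+\Delta^2)}$; setting $\Delta^2\asymp \mu_n/(\beta n)$ and optimizing recovers \eqref{jes} with the advertised $\mu_n$. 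Technically this requires: (a) the tube-area bound to be uniform in the (shrinking) neighbourhood, which follows from the $C^1$-Jordan-curve hypothesis on $\d S$; (b) control of the prefactor $C^\beta$, which is why the $\beta$-term appears in \eqref{tess}; and (c) the exterior-of-$\Omega$ bound from the first paragraph, which needs the constant in the super-exponential decay to be uniform — this in turn rests on assumption \ref{E} ($S^*=S$) ensuring there is no ``phantom'' region where the obstacle function touches $Q$ outside $S$. Handling the interplay of these three uniformities while keeping $\mu_n$ as small as \eqref{tess} is the delicate part; everything else is a bookkeeping exercise with the explicit Gaussian integral.
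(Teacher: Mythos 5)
There is a genuine gap, and it is quantitative rather than technical. Your route integrates the bound of Theorem \ref{DTH}, $\bfR_n^\beta\le C^\beta n^2e^{-c\beta n\delta(\zeta)^2}$, over $\{\delta>\rho\}$. With the tube estimate this gives at best $\bfP_n^\beta(D_n>\rho)\lesssim C^\beta n^{2}(\beta n)^{-1/2}e^{-c\beta n\rho^2}$, so forcing the right-hand side below $e^{-2t}$ requires $c\beta n\rho^2\ge 2\log n+2t+O(\beta)$, i.e.\ $\rho^2\ge\frac{2\log n+2t+O(\beta)}{c\beta n}$. That is the theorem with $\log n$ replaced by $2\log n$ (equivalently, only for $c<c_0/2$), and no choice of $\mu_n\lesssim\log\log n+\beta$ can repair it. You correctly notice this, but the ``genuine improvement'' you then propose --- writing $\rho=\rho_0+\Delta$ and extracting the factor $e^{-c\beta n(2\rho_0\Delta+\Delta^2)}$ --- is only a reparametrization: the total exponent is still $c\beta n(\rho_0+\Delta)^2$ against the same prefactor $n^2$, so the requirement $c\beta n\rho^2\ge 2\log n+\cdots$ is unchanged. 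No splitting of $\rho$ can convert the rate $c$ into the rate needed to absorb $n^2$ at the advertised threshold. (A secondary, lesser gap: the contribution from outside the fixed neighbourhood $\Omega$ is not actually ``standard''; in the paper it requires the partition-function ratio bound $Z_{n-1}^\beta/Z_n^\beta\le Ce^{\beta h_2 n}$, uniform in $\beta=\beta_n$, which is the content of Lemma \ref{lem3} and the appendix.)

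The missing idea is that the correct Gaussian rate off the droplet is $2c$, not $c$: by Lemma \ref{lem1}, $\Qeff(\zeta)\ge 2c\delta(\zeta)^2$ near $S$ (the factor $2$ comes from the Taylor expansion $\Qeff(p+\delta N)=2\Lap Q(p)\delta^2+o(\delta^2)$), and this extra factor exactly cancels the $n^2$ prefactor, turning $2\log n$ back into $\log n$. Theorem \ref{DTH} as \emph{stated} has already discarded this factor, so the theorem cannot be recovered from its statement. The paper therefore does not integrate the one-point function at all. Instead it argues pointwise: on the event $B_j=\{X_j\le\lambda\}$ (where $X_j=\int_\C|\ell_j|^{2\beta}dA$), Lemmas \ref{pl2}, \ref{lem2} and \ref{lem1} give $|\ell_j(\zeta)|^{2\beta}\le ne^{s\beta}\lambda\, e^{-2c\beta n\delta(\zeta)^2}$, while $|\ell_j(\zeta_j)|=1$; hence $\zeta_j$ is forced into the $\delta_n$-neighbourhood of $S$ with $\delta_n^2=\frac{1}{2c\beta n}(2\log n+\log(\lambda/n)+O(\beta))$. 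The probability that some $B_j$ fails is controlled by the exact identity of Lemma \ref{pl3} ($\bfE_n^\beta[\1_{U_n}(\zeta_j)X_j]=|U_n|$) together with Lemma \ref{lem3}, and the $\log\log n$ in $\mu_n$ arises from $|U_n|=(\log n)^2$. If you want to salvage your approach you would need to first prove the sharper density bound $\bfR_n^\beta\lesssim C^\beta n^2e^{-2c\beta n\delta^2}$ (which the proof of Theorem \ref{DTH} in fact yields) and only then integrate; as written, your argument proves a strictly weaker localization.
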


\begin{rem} The meaning of the notation $a_n\lesssim b_n$ is that there exists a number $n_0$ and a constant $C$ such that
$a_n\le Cb_n$ for all $n\ge n_0$. (The number $n_0$ may depend on the sequence $(\beta_n)$ and $Q$, while $C$ depends only on $n_0$ and $Q$.)
The symbol $a_n\asymp b_n$ means
$a_n\lesssim b_n$ and $b_n\lesssim a_n$.
\end{rem}

Recall from \cite{HM} that (for any fixed $\beta>0$) the system $\{\zeta_j\}_1^n$ tends to follow the equilibrium measure $\sigma$ in the sense that
\begin{equation}\label{eqc}\lim_{n\to\infty}\frac 1 n\bfE_n^\beta(f(\zeta_1)+\cdots+f(\zeta_n))\to \sigma(f)\end{equation}
for each bounded continuous function $f$.
This 
implies, in a loose sense, that that the particles are likely to stay in the immediate vicinity of the droplet. It could be said that Theorem \ref{TH1} gives more detailed information about exactly how ``localized'' the gas is about the droplet.

\smallskip

To illustrate this point, we may observe that if we fix a $\beta>0$ and
choose $t=t_n$ so that $t_n\to\infty$ and $t_n/\log n\to 0$ as $n\to\infty$,
then \eqref{jes} implies
\begin{align}\label{loca}A>\sqrt{\frac 1 {c_0}}\qquad \text{implies}\qquad
  \lim_{n\to\infty}\bfP_n^\beta&\left(\left\{D_n>A\sqrt{\frac {\log n}{\beta n}}\right\}\right)=0.\end{align}

 Hence if $A$ satisfies the premise in \eqref{loca}, then the gas is effectively localized to the set of $\zeta$ with \begin{equation}\label{smova}\delta(\zeta)<A\sqrt{\frac {\log n} {\beta n}}.\end{equation}

The estimate \eqref{smova} might be compared with earlier results on the distribution of the spectral radius
of certain types of normal random matrices, due to Rider \cite{R} for the Ginibre ensemble, cf.~\cite{CP}
for more general ensembles corresponding
to radially symmetric potentials $Q$.

Indeed, as is well-known, we can interpret the Coulomb gas $\{\zeta_j\}_{j=1}^n$ in external potential $Q$ at inverse temperature $\beta=1$ as eigenvalues of normal random matrices. (See \cite{CZ,EF} for details.) Let us temporarily assume that $Q$ is radially symmetric and that
the droplet is the disc centered at $0$ of radius $R$.

A normal matrix with eigenvalues $\{\zeta_j\}_1^n$ has its spectral radius equal to $\max_{1\le j\le n}|\zeta_j|$, and if $D_n>0$
then clearly
\begin{equation}\label{iden}R+D_n=\max_{1\le j\le n}|\zeta_j|.\end{equation}

The identity \eqref{iden} may of course fail if $D_n=0$, but since $\bfP_n^1(\{D_n=0\})\to 0$ rapidly as $n\to\infty$ by estimates in \cite{CP,R}, we may regard \eqref{iden} as ``essentially'' an identity.

 We shall show in Subsection \ref{comp}
that the estimate \eqref{smova} comes close to earlier results on spectral radii 
in \cite{R,CP}, 
in the sense that the order of magnitude of our obtained localization is comparable with what is obtained in those papers.

\smallskip

While our main focus is on the case when $t=t_n$ increases slowly to infinity in the sense that $t_n/\beta n$ is ``small'', it is also
relevant to note the following theorem, concerning the case when $t_n/\beta n$ is ``large''.

\begin{mth}\label{mprop} Keeping the conditions in Theorem \ref{TH1}, there exists a number $r_0>0$ such that for each $r\ge r_0$ there is a number $k(r)>0$ such that
$$\bfP_n^{\beta_n}(\{D_n>r\})\lesssim \exp (-k(r)\cdot \beta_n n).$$
Indeed, we may take $k(r)$ proportional to $\min\{\Qeff(\zeta);\, \delta(\zeta)\ge r\}$ where $\Qeff$ is the effective potential defined in Section \ref{Sec2}.
\end{mth}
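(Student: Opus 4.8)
The plan is to estimate $\bfP_n^{\beta_n}(\{D_n > r\})$ by a direct energy comparison, exploiting the fact that moving a single particle far from the droplet costs energy of order $\beta_n n \cdot \Qeff$. First I would observe that $\{D_n > r\} = \bigcup_{j=1}^n \{\delta(\zeta_j) > r\}$, so by symmetry and a union bound it suffices to estimate $n \cdot \bfP_n^{\beta_n}(\{\delta(\zeta_1) > r\})$. The standard device is to write this probability as a ratio of two partition-function-type integrals: integrating out $\zeta_1$ over $\{\delta > r\}$ against the frozen (conditional) Gibbs factor in the remaining variables, divided by $Z_n^{\beta_n}$. Comparing the constrained integral with the full one, the gain is governed by $\inf\{\,\Qeff(\zeta) : \delta(\zeta) \ge r\,\}$, where $\Qeff$ is the effective potential from Section \ref{Sec2}; by the growth hypothesis \eqref{gro} this infimum is strictly positive and in fact tends to $+\infty$ as $r \to \infty$, which is why $r_0 > 0$ exists such that the bound is meaningful for $r \ge r_0$.

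More concretely, I would freeze $\zeta_2,\dots,\zeta_n$ and look at the one-variable potential felt by $\zeta_1$, namely $U_n(\zeta) = Q(\zeta) - \tfrac{2}{n}\sum_{k\ge 2}\log|\zeta - \zeta_k|$ (up to the normalization conventions in the paper). The effective potential $\Qeff$ is, roughly, $Q$ plus $2\log|\zeta|$ corrections minus the logarithmic potential of $\sigma$, designed precisely so that $\Qeff \ge 0$ with equality on $S$, and so that for $\zeta$ outside a neighbourhood of $S$ one has $U_n(\zeta) \gtrsim \Qeff(\zeta) + \text{(bulk energy constant)}$ with high probability — or, cleanly, after integrating. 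The key algebraic step is to show
\begin{equation*}
\bfP_n^{\beta_n}(\{\delta(\zeta_1) > r\}) \;\lesssim\; \exp\!\left(-\beta_n n \cdot \min_{\delta(\zeta)\ge r}\Qeff(\zeta) \;+\; o(\beta_n n)\right),
\end{equation*}
with the error term absorbable because $\min_{\delta \ge r}\Qeff$ is bounded below by a positive constant for $r \ge r_0$; then $k(r)$ proportional to that minimum does the job, and the extra factor $n$ from the union bound is swallowed by $\exp(-k(r)\beta_n n)$ since $\beta_n n / \log n \to \infty$ by \eqref{ass0}.

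The main obstacle is making the energy comparison rigorous when the conditioning variables $\zeta_2,\dots,\zeta_n$ are themselves random: one needs a lower bound on the conditional logarithmic interaction $-\tfrac{1}{n}\sum_{k\ge 2}\log|\zeta_1 - \zeta_k|$ that holds uniformly, or on average, without losing more than $o(\beta_n n)$. The clean way around this is to avoid conditioning altogether and instead bound the full constrained integral $\int_{\{\delta(\zeta_1)>r\}} e^{-\beta_n H_n}\,dA_n$ directly, using the standard lower bound on $Z_n^{\beta_n}$ in terms of $e^{-\beta_n n^2 I_Q[\sigma] + o(\beta_n n^2)}$ together with a matching upper bound on the numerator obtained by replacing the logarithmic energy of the $n-1$ free particles by the equilibrium energy plus controlled fluctuations and isolating the cost $\beta_n n \Qeff(\zeta_1)$ of the displaced charge. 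Results of this type — comparing partition functions with and without a charge placed at a prescribed location — are precisely what the effective potential $\Qeff$ is built to encode, so I expect the proof to reduce to invoking the estimates on $\Qeff$ and $Z_n^{\beta}$ established in Section \ref{Sec2}, with the union bound and the temperature hypothesis \eqref{ass0} handling the final bookkeeping.
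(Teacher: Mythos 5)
Your strategy is genuinely different from the paper's. What you outline is essentially the Chafa\"{i}--Hardy--Ma\"{i}da route: express $\bfP_n^\beta(\{\delta(\zeta_1)>r\})$ as a ratio of a constrained integral to $Z_n^\beta$ and show that the numerator loses a factor $\exp(-\beta n\min_{\delta\ge r}\Qeff)$; the paper itself flags \cite[Theorem 1.12]{CHM} as the closely related result obtained exactly this way. The paper instead derives Theorem \ref{mprop} as an almost immediate corollary of the apparatus already assembled for Theorem \ref{TH1}: by the exact identity of Lemma \ref{pl3} combined with Lemma \ref{lem3}, the event $B=\bigcap_j\{X_j\le\lambda\}$ has probability at least $1-KnR_n^2/\lambda$; on $B$, Lemma \ref{pl2} gives $\|\ell_j\|_{L^\infty(S)}^{2\beta}\le ne^{\,\propco\beta}\lambda$, so the maximum principle (Lemma \ref{lem2}) forces $|\ell_j(\zeta)|^{2\beta}\le ne^{\,\propco\beta}\lambda e^{-n\beta\Qeff(\zeta)}$ everywhere; since $\ell_j(\zeta_j)=1$, this is incompatible with $\delta(\zeta_j)\ge r$ once $\lambda$ is of size $e^{n\beta k_0}$ with $2k_0=\min_{\delta\ge r}\Qeff$, and the failure probability $KnR_n^2/\lambda$ is then $\lesssim e^{-\beta k_0 n/2}$. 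No union bound over $j$ and no conditional energy estimate is needed.

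The weak point of your plan is that the ``key algebraic step'' is exactly where all the difficulty lives, and you leave it unproved. An upper bound on $\int_{\{\delta(\zeta_1)>r\}}e^{-\beta H_n}\,dA_n$ of the advertised form requires controlling the exponential moment $\bfE_{n-1}^\beta\bigl[\exp\bigl(\beta\sum_{j}\log|\zeta-\zeta_j|^2\bigr)\bigr]$, i.e.\ upward fluctuations of the linear statistic $\sum_j\log|\zeta-\zeta_j|^2$ at exponential scale; the weak convergence \eqref{eqc} is far too soft for this (in the paper it is only used inside a Jensen inequality, which bounds such quantities in the direction opposite to the one you need). Likewise, ``replacing the logarithmic energy of the $n-1$ free particles by the equilibrium energy plus controlled fluctuations'' is a large-deviation lower bound on the energy of a configuration with one pinned particle, which is not among the estimates of Section \ref{Sec2}: that section contains no partition-function bounds at all, and the ones that do appear (in Lemma \ref{lem3} and the appendix, e.g.\ the inequality \eqref{key}) would have to be reworked with one coordinate frozen. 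So the plan is viable but its core must still be supplied, either by adapting the appendix's energy argument or by importing \cite[Theorem 1.12]{CHM} outright. If carried out, your approach buys a sharper constant (essentially $k(r)$ close to $\min_{\delta\ge r}\Qeff$, versus the paper's $\tfrac14\min_{\delta\ge r}\Qeff$) and extends to higher dimensions, at the cost of considerably more work than the paper's short argument.
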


We remark that Theorem \ref{mprop} is closely related to a result of Chafa\"{i}, Hardy, and Ma\"{i}da, which holds in dimension $d\ge 2$. See \cite[Theorem 1.12]{CHM}.

\subsection*{Plan of this paper} In Section \ref{Sec2} we give some background on potential theory and weighted polynomials.
Theorem \ref{DTH} is proven in Section \ref{Sec3} while
 Section \ref{Sec_proof}
 contains proofs of Theorem \ref{TH1} and Theorem \ref{mprop}.
In Section \ref{concrem} we will state and prove generalized versions of the above theorems, and we also discuss some related earlier work in the area.

\section{Preparation}\label{Sec2}

In order to make this note as detailed and complete as possible, we shall now review some notions from the theories of obstacle problems and of weighted polynomials. We shall also discuss, in a suitably adapted form, some relevant background from \cite{A}.

As general sources for some statements taken for granted below, we refer to the book \cite{ST} and the paper \cite{HM}.

We stress, once and for all, that in the following, the external potential $Q$ is assumed to be a fixed function satisfying assumptions \ref{A0}-\ref{E} above.

\subsection{The obstacle problem} \label{obs}
Let $\calF_Q$ be the family of all subharmonic functions $f$ on $\C$ which are everywhere $\le Q$ and which satisfy $f(\zeta)\le 2\log|\zeta|+O(1)$ as $\zeta\to\infty$.

We define a subharmonic function $\check{Q}$ on $\C$ by
$$\check{Q}(\zeta)=\sup\{f(\zeta);\, f\in\calF_Q\}.$$
This is the \textit{obstacle function} corresponding to the obstacle $Q$; it is well-known and easy to check that $\check{Q}$ satisfies
$\check{Q}\le Q$ and $\check{Q}(\zeta)=2\log|\zeta|+O(1)$ as $\zeta\to\infty$.

\smallskip

Borrowing notation from \cite{BBLM}, we define the \textit{effective potential}
$$\Qeff:=Q-\check{Q},$$
and note that $\Qeff\ge 0$ on $\C$.
By the \textit{coincidence set} for the obstacle problem we shall mean the compact set
$$S^*:=\{\Qeff=0\}.$$

It is well-known (cf. \cite{HM}) that
$\check{Q}$ is $C^{1,1}$-smooth on $\C$ and harmonic in the complement $(S^*)^c$. (``$C^{1,1}$-smooth'' means that the gradient is Lipschitz continuous.)
Moreover,
$\check{Q}$ is related to the equilibrium measure $\sigma$ by
$$\check{Q}(\zeta)=-2U^\sigma(\zeta)+\gamma$$
where $\gamma$ is a suitable (Robin's) constant and
$$U^\sigma(\zeta):=\int_\C\log\frac {1} {|\zeta-\eta|} \, d\sigma(\eta)$$ is the logarithmic potential of $\sigma$.

Differentiating in the sense of distributions we have
$$\Lap \check{Q}=\Lap Q\cdot \1_{S^*}=-2\Lap U_\sigma=\sigma.$$
Hence, since $S$ is the support of $\sigma$, we have the inclusion
$$S \subset S^*.$$

In general the difference set $S^*\setminus S$ may be non-empty, consisting then of ``shallow points'' in the parlance of \cite{HM}. However, our
assumption \ref{E} says precisely that there are no shallow points, \ie, that $S^*=S$. Thus condition \ref{E} can be restated as that
\begin{equation}\label{po}\Qeff>0\qquad \text{on}\qquad S^c.\end{equation}
(Here and in the following, $S^c:=\C\setminus S$.)

\smallskip

We will have frequent use for the following simple lemma.
A proof is included for completeness.

\begin{lem} \label{lem1} There exists a number $\an>0$ such that
$$\Qeff(\zeta)\ge 2\min\left\{c\cdot \delta(\zeta)^2,\an\right\}\qquad \text{for\, all}\qquad \zeta\in S^c.$$
Here the constant $c$ is any positive number with $c<c_0$ (cf. \eqref{cdef}).
\end{lem}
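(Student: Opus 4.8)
The estimate is local near $\d S$ combined with a crude global bound far from $S$, glued together by the geometry of the sublevel sets of $\Qeff$. Concretely, I would split into two regimes according to whether $\delta(\zeta)$ is small or bounded below.

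\emph{Local regime near $\d S$.} Fix a boundary point $\eta_0\in\d S$ and work in a small neighbourhood. Since $\check Q$ is $C^{1,1}$ on $\C$ and harmonic off $S^*=S$, while $Q$ is $C^2$ and strictly subharmonic near $\d S$ by assumption \ref{B}, the function $\Qeff=Q-\check Q$ vanishes to second order on $S$ in the normal direction. More precisely, for $\zeta$ just outside $S$ near $\eta_0$, let $p=p(\zeta)\in\d S$ be a nearest boundary point, so $|\zeta-p|=\delta(\zeta)$. Along the inward normal, $\Qeff$ and its first-order derivatives vanish at $p$ (gradient continuity of $\check Q$ across $\d S$, since $\nabla\check Q=\nabla Q$ on $S$ and $\check Q\le Q$ with equality on $S$ forces the normal derivative to match), so Taylor's theorem with the $C^{1,1}$/$C^2$ regularity gives
$$\Qeff(\zeta)=\tfrac12\,\d_\nu^2\Qeff(p)\,\delta(\zeta)^2+o(\delta(\zeta)^2).$$
The key computation is that the second normal derivative equals $4\Lap Q(p)$: since $\check Q$ is harmonic on $S^c$ near $p$ and $C^{1,1}$, its Laplacian as a measure has no mass just outside $S$, so $\Lap\Qeff=\Lap Q$ pointwise there, and the tangential second derivative of $\Qeff$ vanishes on $\d S$ (because $\Qeff\equiv0$ on $S\supset$ a piece of the $C^1$ curve $\d S$), leaving $\d_\nu^2\Qeff(p)=4\Lap Q(p)-(\text{tangential part})=4\Lap Q(p)$ up to curvature corrections that are lower order. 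Hence, by definition of $c_0$ in \eqref{cdef} and compactness of $\d S$, for any $c<c_0$ there is $\rho>0$ with $\Qeff(\zeta)\ge 2c\,\delta(\zeta)^2$ whenever $0<\delta(\zeta)<\rho$.

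\emph{Global regime.} On the set $\{\zeta\in S^c:\delta(\zeta)\ge\rho\}$, we have $\Qeff>0$ by \eqref{po} (assumption \ref{E}); but this set need not be compact, so I invoke the growth of $\check Q$: since $\check Q(\zeta)=2\log|\zeta|+O(1)$ while $Q(\zeta)/2\log|\zeta|\to$ something $>1$ (condition \eqref{gro}), $\Qeff(\zeta)\to\infty$ as $\zeta\to\infty$. Therefore $\inf\{\Qeff(\zeta):\delta(\zeta)\ge\rho\}$ is attained on a compact set where $\Qeff>0$ (and $\Qeff$ is lower semi-continuous, being the difference of the l.s.c.\ $Q$ and the continuous $\check Q$), so this infimum is a strictly positive number; call it $2\an$. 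Then $\Qeff(\zeta)\ge 2\an$ on that set, and combining the two regimes yields $\Qeff(\zeta)\ge 2\min\{c\,\delta(\zeta)^2,\an\}$ for all $\zeta\in S^c$, shrinking $\an$ if necessary so that $2c\rho^2\ge 2\an$ makes the two pieces match at $\delta=\rho$.

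\emph{Main obstacle.} The delicate point is the local lower bound: justifying that $\Qeff$ genuinely behaves like $2\Lap Q\cdot\delta^2$ near $\d S$ and not merely $\gtrsim\delta^2$ with a worse constant. This requires the precise matching of first derivatives of $Q$ and $\check Q$ across $\d S$ (so there is no linear term), controlling the nearest-point projection $p(\zeta)$ where $\d S$ is only $C^1$ (so curvature terms must be absorbed rather than computed), and extracting exactly the constant $c_0$ uniformly. The cleanest route is to compare $\Qeff$ with the effective potential of a suitable ``osculating'' model potential along each normal segment, or alternatively to integrate $\Lap\Qeff=\Lap Q\,\1_{S^c}$ twice along the normal using that $\check Q$ is harmonic outside $S$; either way the $o(\delta^2)$ error is uniform by compactness of $\d S$ and continuity of $\Lap Q$.
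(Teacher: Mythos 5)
Your proof follows essentially the same route as the paper's: a Taylor expansion of $\Qeff$ at a nearest boundary point (using that $\Qeff$ and its gradient vanish on $S$, that $\check{Q}$ is harmonic off $S$, and that the tangential second derivative vanishes so the normal second derivative equals $4\Lap Q$), combined with a lower-semicontinuity/growth argument giving a strictly positive minimum of $\Qeff$ on $\{\delta(\zeta)\ge\rho\}$. The only presentational difference is that the paper makes the boundary Taylor expansion rigorous via a Whitney $C^2$ extension of $\Qeff|_{S^c}$, which is precisely the device that handles the regularity issues you flag as the ``main obstacle.''
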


\begin{proof} Fix a boundary point $p\in \d S$ and let $N=N_p$ be the unit normal to $\d S$ at $p$ pointing outwards from $S$.
Let $V$ be a $C^2$-smooth (Whitney's) extension of $\Qeff|_{S^c}$ to a neighbourhood of $\d S$.

We shall write $\d_N V(p)$ for the directional derivative
in direction $N$ and $\d_T V(p)$ for the derivative in the (positively oriented) tangential direction to $\d S$.

By the $C^{1,1}$-smoothness of $\Qeff$ and the fact that $\Qeff=0$ on $S$ we have
$V(p)=\d_NV(p)=0$. Moreover, $(\d_{N}^2+\d^2_T)V(p)=4\Lap Q(p)>0$ and $\d^2_TV(p)=0$.

For small $\delta>0$ we hence obtain by Taylor's formula that
$$\Qeff(p+\delta N)=V(p+\delta N)=2\Lap Q(p)\delta^2+o(\delta^2),\qquad (\delta\to 0+).$$

  Moreover, there is $\delta_0>0$ such that $\Qeff(\zeta)\ge  2c\delta(\zeta)^2$ when $\delta(\zeta)\le\delta_0$.

Finally, by the lower semi-continuity of
$\Qeff$ and the assumptions \eqref{gro}, \eqref{po} we conclude that $\Qeff$ attains a strictly positive minimum over the set $\{\delta(\zeta)\ge \delta_0\}$. The lemma follows if we denote this minimum value by ``$2a_0$''.
\end{proof}

We next note the following lemma -- a simple consequence of our growth assumption on the external potential $Q$.

\begin{lem}\label{LemF} $\int_\C Qe^{-Q}\, dA<\infty$ where $Qe^{-Q}:=0$ on $\{Q=+\infty\}$.
\end{lem}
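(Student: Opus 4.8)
The plan is to split the integral $\int_\C Q e^{-Q}\, dA$ into a bounded region and a neighbourhood of infinity, and on the latter use the growth hypothesis \eqref{gro} to dominate $Q e^{-Q}$ by an integrable radial function. First I would choose, using \eqref{gro}, a number $\varepsilon>0$ and a radius $R_0>0$ such that $Q(\zeta)\ge (2+\varepsilon)\log|\zeta|$ for all $|\zeta|\ge R_0$; I may also take $R_0$ large enough that $Q(\zeta)\ge 1$ there, so that on $\{|\zeta|\ge R_0\}$ the function $t\mapsto t e^{-t}$ is evaluated on $[1,\infty)$ where it is decreasing. Consequently, for $|\zeta|\ge R_0$ we have $0\le Q(\zeta) e^{-Q(\zeta)}\le (2+\varepsilon)\log|\zeta|\cdot |\zeta|^{-(2+\varepsilon)}$ (wherever $Q(\zeta)<+\infty$; on $\{Q=+\infty\}$ the integrand is $0$ by convention).

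Next I would estimate the two pieces separately. On the exterior region, passing to polar coordinates (recalling $dA$ is normalized Lebesgue measure divided by $\pi$),
\begin{equation*}
\int_{|\zeta|\ge R_0} Q e^{-Q}\, dA \;\le\; \int_{|\zeta|\ge R_0} (2+\varepsilon)\frac{\log|\zeta|}{|\zeta|^{2+\varepsilon}}\, dA
\;=\; (2+\varepsilon)\cdot 2\int_{R_0}^\infty \frac{\log r}{r^{1+\varepsilon}}\, dr \;<\;\infty,
\end{equation*}
since $\int^\infty r^{-1-\varepsilon}\log r\, dr$ converges for $\varepsilon>0$. On the bounded region $\{|\zeta|< R_0\}$, the elementary bound $t e^{-t}\le e^{-t/2}\cdot\sup_{t\ge 0}(te^{-t/2})\le 2e^{-1}\,e^{-t/2}$ — or even more simply $\sup_{t\in\R}\, t e^{-t} = e^{-1}$ for $t\ge 0$ together with the fact that $t e^{-t}\le 0$ is impossible since $t=Q$ could be negative; here one uses instead that $Q$ is bounded below on the compact set $\{|\zeta|\le R_0\}$ because $Q$ is lower semi-continuous there — gives $Q e^{-Q}\le M$ on $\{|\zeta|<R_0\}$ for some constant $M$, hence $\int_{|\zeta|<R_0} Q e^{-Q}\, dA\le M R_0^2<\infty$. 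Adding the two bounds gives the claim.

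The only mild subtlety, and the one point worth stating carefully, is the behaviour on the bounded part: $Q$ may take the value $+\infty$ there (on $\Sigma^c$), but with the convention $Qe^{-Q}:=0$ on $\{Q=+\infty\}$ the integrand is still well-defined, nonnegative where $Q\ge 0$, and bounded above, since lower semi-continuity of $Q$ on the compact set $\{|\zeta|\le R_0\}$ yields a finite lower bound $Q\ge -C_0$ there, whence $Q e^{-Q}\le \max\{0, -C_0\} \cdot e^{C_0}$ — bounded — wherever $Q$ is finite, and $=0$ otherwise. There is no real obstacle here; this is a routine tail estimate, and the growth assumption \eqref{gro} does all the work at infinity while compactness and lower semi-continuity handle the rest.
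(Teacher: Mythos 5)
Your proof is correct and follows essentially the same route as the paper's: both use the growth condition \eqref{gro} to dominate $Qe^{-Q}$ at infinity by an integrable power of $|\zeta|$ (you keep the $\log|\zeta|$ factor explicitly, the paper absorbs it via $xe^{-x}\le e^{-\theta x}$), with the compact part being trivial. The only blemish is the final displayed bound $Qe^{-Q}\le\max\{0,-C_0\}\cdot e^{C_0}$, which is mis-stated; the clean statement is simply $te^{-t}\le e^{-1}$ for all $t\in\R$, which already bounds the integrand on the bounded region.
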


\begin{proof} Let $\alpha$ be a number in the range $1<\alpha<\liminf_{\zeta\to\infty}Q(\zeta)/\log|\zeta|^2$, cf. \eqref{gro}.
Fix a number $\theta<1$ such that $\theta\alpha >1$ and let $x_\theta$ be a real number such that
$xe^{-x}\le e^{-\theta x}$ when $x\ge x_\theta$. Choosing $x_\theta$ somewhat larger if necessary we can also assume that
$Q(\zeta)\ge \alpha \log|\zeta|^2$ when $|\zeta|\ge x_\theta$. Then $Q(\zeta)e^{-Q(\zeta)}\le |\zeta|^{-2\alpha\theta}$ when
$|\zeta|\ge x_\theta$, proving the lemma.
\end{proof}

\subsection{Weighted polynomials}
Let $\calW_n$ denote the subspace of $L^2=L^2(\C,dA)$ consisting of elements (weighted polynomials)
$$f=q\cdot e^{-nQ/2}$$
where $q$ is a holomorphic polynomial of degree at most $n-1$.

\smallskip

The following well-known lemma is sometimes known
as the ``maximum principle of weighted potential theory''. We outline a proof for convenience.

\begin{lem} \label{lem2} If $f\in\calW_n$ and $\zeta\in\C$ then
$$|f(\zeta)|\le \|f\|_{L^\infty(S)}\cdot e^{-n\Qeff(\zeta)/2}.$$
\end{lem}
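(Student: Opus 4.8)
The plan is to prove Lemma~\ref{lem2} by a standard subharmonicity argument adapted to weighted polynomials. Write $f = q\cdot e^{-nQ/2}$ with $\deg q\le n-1$, and consider the function
$$u(\zeta) := \frac{1}{n}\log|q(\zeta)|^2 - \check{Q}(\zeta).$$
The first observation is that $u$ is subharmonic on $\C$: indeed $\frac{1}{n}\log|q|^2$ is subharmonic (being a positive multiple of $\log$ of the modulus of a holomorphic function), while $-\check{Q}$ is superharmonic only where $\check{Q}$ is subharmonic, so this needs a little care -- but $\check{Q}$ is harmonic on $(S^*)^c = S^c$ (using assumption~\ref{E}), and on $S$ we have $\Lap\check{Q} = \Lap Q\cdot\1_{S^*}$, so $-\Lap\check{Q}\le 0$ there; thus the only issue is the distributional Laplacian of $u$, which equals $\frac{1}{n}\sum 2\pi\delta_{\text{zeros of }q} - \sigma$ as a measure. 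That is not obviously positive. So instead I would argue via the maximum principle for the \emph{difference}: consider the domain $\nbh = S^c$, on which $\check{Q}$ is harmonic.

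Here is the cleaner route. On $S^c$ the function $\check Q$ is harmonic, hence $-\check Q$ is harmonic there too; therefore $u = \frac1n\log|q|^2 - \check Q$ is subharmonic on the open set $S^c$. Its behaviour at infinity: since $\deg q\le n-1$ we have $\frac1n\log|q(\zeta)|^2 \le \frac{n-1}{n}\log|\zeta|^2 + O(1) \le \log|\zeta|^2 + O(1)$, while $\check Q(\zeta) = 2\log|\zeta| + O(1) = \log|\zeta|^2 + O(1)$; hence $u(\zeta)$ is bounded above as $\zeta\to\infty$. Thus $u$ is subharmonic on $S^c$, bounded above near $\infty$, so by the maximum principle (for subharmonic functions on unbounded domains, controlling the behaviour at the point at infinity) $u$ attains its supremum on the boundary $\d S \subset S$:
$$u(\zeta) \le \sup_{\d S} u \le \sup_{S} u = \sup_{S}\Big(\tfrac1n\log|q|^2 - \check Q\Big) \qquad (\zeta\in S^c).$$
On $S$ we have $\check Q = Q$ (since $\Qeff = Q-\check Q = 0$ there), so $\tfrac1n\log|q|^2 - \check Q = \tfrac1n\log|q|^2 - Q = \tfrac1n\log\big(|q|^2 e^{-nQ}\big) = \tfrac1n\log|f|^2$ on $S$. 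Hence for $\zeta\in S^c$,
$$\tfrac1n\log|q(\zeta)|^2 - \check Q(\zeta) \le \sup_S \tfrac1n\log|f|^2 = \tfrac2n\log\|f\|_{L^\infty(S)}.$$
Rearranging and using $\check Q = Q - \Qeff$ gives $\tfrac1n\log|q(\zeta)|^2 \le \tfrac2n\log\|f\|_{L^\infty(S)} + Q(\zeta) - \Qeff(\zeta)$, i.e. $|q(\zeta)|^2 e^{-nQ(\zeta)} \le \|f\|_{L^\infty(S)}^2\, e^{-n\Qeff(\zeta)}$, which is the claimed inequality after taking square roots. For $\zeta\in S$ the inequality is trivial since $\Qeff(\zeta)=0$ there and $|f(\zeta)|\le\|f\|_{L^\infty(S)}$.

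The main technical point -- the ``hard part'' such as it is -- is the justification of the maximum principle on the unbounded domain $S^c$: one must rule out the supremum escaping to infinity, which is exactly what the growth bound $u(\zeta) = O(1)$ at $\infty$ provides (one can, e.g., apply the maximum principle to $u - \eps\log|\zeta - \zeta_0|$ on large annuli for an interior point $\zeta_0$ of $S$, or invoke that a function subharmonic on $S^c$ and bounded above extends to a subharmonic function near $\infty$ on the Riemann sphere). A secondary point worth a sentence is that $q$ could vanish, making $\log|q|^2 = -\infty$; this causes no trouble since $-\infty$ only helps the inequality, and the maximum principle for subharmonic functions (allowed to take the value $-\infty$) applies verbatim. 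Everything else is bookkeeping with the identities $\Qeff = Q - \check Q$ and $\check Q|_S = Q|_S$.
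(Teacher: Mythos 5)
Your proof is correct and follows essentially the same route as the paper: reduce to showing $\tfrac1n\log|q|^2\le\check Q$ by combining the bound on $S$ (where $\check Q=Q$) with the growth comparison at infinity and the maximum principle on $S^c$, where $\check Q$ is harmonic. The paper compresses this into the phrase ``a suitable version of the maximum principle''; your argument is precisely the standard justification of that step, including the correct handling of the point at infinity as a removable (polar) singularity for the bounded-above subharmonic function $\tfrac1n\log|q|^2-\check Q$.
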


\begin{proof} We may assume that $\|f\|_{L^\infty(S)}=1$ where $f=q\cdot e^{-nQ/2}$.
Since
$$\frac 1 n \log|q(\zeta)|^2=\frac 1 n\log|f(\zeta)|^2+Q(\zeta)$$ we see that the function $u:=\frac 1 n\log|q|^2$, which is subharmonic on $\C$, satisfies
$u\le Q$ on $S$, and furthermore $u(\zeta)\le \log|\zeta|^2+O(1)$ as $\zeta\to\infty$.

A suitable version of
the maximum principle now shows that $u\le\check{Q}$ on $\C$, thus finishing the proof of the lemma.
\end{proof}

 We now fix, once and for all, an open neighbourhood $V$ of the droplet $S$, which is small enough so that $\Lap Q$ is continuous and strictly positive in a neighbourhood $V_1$ of the closure $\overline{V}$. This is possible by assumption \ref{B}.

\smallskip


Next fix a number $\propco$ with
$$\propco>\max\{\Lap Q(\zeta);\, \zeta\in \overline{V}\}.$$

\begin{lem} \label{pl2} There exists $n_0>0$ such that if $n\ge n_0$ then for each $f\in\calW_n$ and each $\zeta_0\in \overline{V}$ we have the pointwise-$L^{2\beta}$ estimate
$$|f(\zeta_0)|^{2\beta}\le ne^{\,\propco \beta}\int_{D(\zeta_0;1/\sqrt{n})}|f|^{2\beta}\, dA.$$
\end{lem}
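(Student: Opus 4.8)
The plan is to reduce the pointwise-$L^{2\beta}$ bound to a sub-mean value property for a suitable subharmonic function. Write $f = q e^{-nQ/2}$ with $\deg q \le n-1$. The function $|f|^{2\beta} = |q|^{2\beta} e^{-\beta n Q}$ is not subharmonic in general, but $|q|^{2\beta}$ is, being a positive power of the modulus of a holomorphic function. So the natural move is to estimate $|q(\zeta_0)|^{2\beta}$ by its average over the disc $D(\zeta_0;1/\sqrt n)$, and then transfer the weight $e^{-\beta n Q}$ from the center to the disc, paying the cost of the oscillation of $Q$ on a disc of radius $1/\sqrt n$.

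First I would invoke the sub-mean value inequality: since $|q|^{2\beta}$ is subharmonic,
$$|q(\zeta_0)|^{2\beta} \le \frac{1}{\pi r^2}\int_{|\zeta-\zeta_0|<r} |q|^{2\beta}\, dA_{\mathrm{Leb}} = \frac{1}{r^2}\int_{D(\zeta_0;r)}|q|^{2\beta}\, dA$$
with $r = 1/\sqrt n$, so the prefactor $1/r^2$ becomes exactly $n$. Next I would multiply and divide by the weight: $|q(\zeta_0)|^{2\beta} = |f(\zeta_0)|^{2\beta} e^{\beta n Q(\zeta_0)}$, and inside the integral $|q(\zeta)|^{2\beta} = |f(\zeta)|^{2\beta} e^{\beta n Q(\zeta)}$. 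Combining,
$$|f(\zeta_0)|^{2\beta} \le n \int_{D(\zeta_0;1/\sqrt n)} |f(\zeta)|^{2\beta}\, e^{\beta n (Q(\zeta)-Q(\zeta_0))}\, dA(\zeta).$$
It then remains to show $Q(\zeta)-Q(\zeta_0) \le \propco/n$ for all $\zeta \in D(\zeta_0;1/\sqrt n)$ and $\zeta_0\in\overline V$, once $n$ is large enough; this gives the factor $e^{\propco\beta}$ and completes the proof.

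The key technical point is this last oscillation estimate, and here is where the constant $\propco$ and the neighbourhood $V_1 \supset \overline V$ enter. For $n$ large, $D(\zeta_0;1/\sqrt n) \subset V_1$ for every $\zeta_0 \in \overline V$, so $Q$ is $C^2$ there. By a second-order Taylor expansion of $Q$ around $\zeta_0$, the linear term integrates to the gradient contribution which is $O(1/\sqrt n)$ pointwise but — and this is the crucial observation — one should instead bound the pointwise value: actually the clean route is to note $Q(\zeta) - Q(\zeta_0) \le \max_{V_1}|\nabla Q|\cdot \tfrac{1}{\sqrt n} + O(1/n)$, which is not quite $O(1/n)$. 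So the genuinely correct argument must use the sub-mean value property more cleverly: replace the crude bound by expanding and observing that the average of the gradient term over the disc vanishes. Concretely, apply the sub-mean value inequality not to $|q|^{2\beta}$ directly but combine it with the fact that $\int_{D(\zeta_0;r)}(\zeta-\zeta_0)\, dA(\zeta)=0$; writing $Q(\zeta) = Q(\zeta_0) + \mathrm{(linear)} + \mathrm{(quadratic)}$ and using convexity-type manipulations, the linear term drops out and what survives is controlled by $\sup_{V_1}\Lap Q \cdot r^2 \cdot n = \sup_{V_1}\Lap Q$, up to lower-order corrections, which is $< \propco$ by the choice of $\propco$ for $n$ large.

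The main obstacle, then, is organizing this cancellation of the first-order term cleanly enough to land the constant exactly at $e^{\propco\beta}$ rather than something like $e^{C\sqrt{\beta}\sqrt n}$. I expect the author handles it either by the gradient-cancellation trick sketched above, or — more likely and more slickly — by a change of variables/subharmonicity argument applied to $|f|^{2\beta}$ against a comparison weight: since $\Lap Q < \propco$ on $V_1$, the function $e^{\propco\beta|\zeta|^2}|f(\zeta)|^{2\beta}$ dominates, in a mean-value sense on small discs, a genuinely subharmonic majorant, so that the sub-mean value inequality applies to it with the disc radius $1/\sqrt n$ producing precisely the factor $n e^{\propco\beta}$. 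Either way the analytic content is elementary; the care is entirely in bookkeeping the constants and the threshold $n_0$, which may depend on $Q$, $V$, $V_1$ and $\propco$ but not on $\beta$ or $f$.
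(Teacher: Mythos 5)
You have correctly guessed the paper's argument in your second alternative, but as written neither branch of your proposal closes, and the first branch would genuinely fail. The gradient-cancellation route does not work: after the sub-mean-value step you must bound $\int_{D(\zeta_0;1/\sqrt n)}|f(\zeta)|^{2\beta}e^{\beta n(Q(\zeta)-Q(\zeta_0))}\,dA(\zeta)$, and the identity $\int_{D}(\zeta-\zeta_0)\,dA=0$ is of no use because the linear term sits inside an exponential and is weighted by $|f|^{2\beta}$, which may concentrate precisely where $2\beta n\,\re\left[\d Q(\zeta_0)(\zeta-\zeta_0)\right]\sim\beta\sqrt n$ is positive; there is no averaging to exploit. (The standard repair of that route is different: split $Q$ near $\zeta_0$ into its harmonic second-order Taylor polynomial $H=\re h$ plus a remainder comparable to $\Lap Q(\zeta_0)|\zeta-\zeta_0|^2$, absorb $e^{-nH/2}=|e^{-nh/2}|$ into the holomorphic factor so that $|qe^{-nh/2}|^{2\beta}$ is genuinely subharmonic, and only then is the surviving error at most $\propco/n$ on the disc.)

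Your second alternative is the paper's proof, but the comparison weight must carry a factor $n$ in the exponent. The paper sets $F(\zeta)=|f(\zeta)|^{2\beta}e^{\propco\beta n|\zeta|^2}$, computes $\Lap\log F\ge\beta n(\propco-\Lap Q)\ge 0$ on $V_1$, so $F$ is logarithmically subharmonic, hence subharmonic, and the sub-mean-value inequality on $D(\zeta_0;1/\sqrt n)$ produces the factor $n$, with the oscillation of the weight over a disc of radius $1/\sqrt n$ accounting for $e^{\propco\beta}$. With your weight $e^{\propco\beta|\zeta|^2}$ (no $n$) one gets $\Lap\log\ge-\beta n\Lap Q+\propco\beta$, which is negative for large $n$ wherever $\Lap Q>0$, so nothing is subharmonic and the argument does not start. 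One further bookkeeping point: the weight should be centered at $\zeta_0$, i.e.\ $F(\zeta)=|f(\zeta)|^{2\beta}e^{\propco\beta n|\zeta-\zeta_0|^2}$, since then $F(\zeta_0)=|f(\zeta_0)|^{2\beta}$ exactly and $e^{\propco\beta n|\zeta-\zeta_0|^2}\le e^{\propco\beta}$ on the disc; with $|\zeta|^2$ the oscillation $|\zeta|^2-|\zeta_0|^2$ is only $O(1/\sqrt n)$, not $O(1/n)$, which is exactly the trap you identified in your first branch.
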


\begin{proof} Consider the function
$$F(\zeta)=|f(\zeta)|^{2\beta}e^{\propco n\beta |\zeta|^2}.$$
Writing $f=q\cdot e^{-nQ/2}$ we have
$$\Lap \log F(\zeta)\ge -\beta n\Lap Q(\zeta)+\propco\beta n\ge 0$$
for all $\zeta\in V_1$. 
We infer that $F$ is logarithmically subharmonic
in $V_1$, and in particular it is subharmonic there.

Applying the sub mean-value inequality, we obtain (for $n$ large enough that $1/\sqrt{n}\le\dist(V,V_1^c)$) the estimate
\begin{align*}F(\zeta_0)&\le n\int_{D(\zeta_0;1/\sqrt{n})} F\, dA\le ne^{\,\propco \beta}\int_{D(\zeta_0;1/\sqrt{n})}|f|^{2\beta}\, dA.
\end{align*}
The proof is complete.
\end{proof}

\subsection{Random variables} Let $\{\zeta_j\}_1^n$ be a random sample with respect to the Gibbs measure \eqref{gibb}.

Consider now, for a fixed $j$ with $j\in\{1,\ldots,n\}$ the random \textit{weighted Lagrange polynomial}
\begin{equation}\label{lag}\ell_j(\zeta)=\left(\prod_{i\ne j}(\zeta-\zeta_i)/\prod_{i\ne j}(\zeta_j-\zeta_i)\right)\cdot e^{-n(Q(\zeta)-Q(\zeta_j))/2}.\end{equation}
These weighted polynomials were used in \cite{A} to study the separation of random configurations. We shall here use similar techniques to examine the localization of the gas.

Towards this end, let us fix a measurable subset $W\subset \C$ and write
$Y_j=Y_{j,W}$ for the random variable
\begin{equation}\label{rvs}Y_{j,W}:=\int_{W}|\ell_j(\zeta)|^{2\beta}\, dA(\zeta).\end{equation}

\subsection{Exact identities}
The following lemma is a slight generalization of \cite[Lemma 2]{A}; it
will play a key r\^{o}le in what follows.


\begin{lem}\label{pl3}
Let $U\subset\C$ be a measurable subset, of $dA$-measure $|U|$. Then
\begin{equation*}\bfE_n^\beta\left[\1_U(\zeta_j)\cdot Y_{j,W}\right]=|U|\cdot p_n^\beta(W),\qquad (j=1,\ldots,n),\end{equation*}
where
$$p_n^\beta(W):=\bfP_n^\beta(\{\zeta_j\in W\})=\bfP_n^\beta(\{\zeta_1\in W\}).$$
\end{lem}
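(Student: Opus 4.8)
The plan is to compute the expectation $\bfE_n^\beta[\1_U(\zeta_j)\cdot Y_{j,W}]$ directly from the definition of the Gibbs measure by unwinding what the weighted Lagrange polynomial $\ell_j$ actually is, and then performing a change of variables. By symmetry of the Hamiltonian in the particles it suffices to treat $j=1$. Writing $f=q\cdot e^{-nQ/2}$, the crucial algebraic observation is that for a configuration $(\zeta_1,\dots,\zeta_n)$ one has
$$|\ell_1(\zeta)|^{2\beta}\cdot e^{-\beta H_n(\zeta_1,\dots,\zeta_n)}=e^{-\beta H_n(\zeta,\zeta_2,\dots,\zeta_n)},$$
i.e.\ inserting the factor $|\ell_1(\zeta)|^{2\beta}$ has the effect of \emph{replacing the particle} $\zeta_1$ by $\zeta$ in the Boltzmann weight. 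Indeed $H_n$ splits as $\sum_{i\ne k,\ i,k\ge 2}\log\frac1{|\zeta_i-\zeta_k|}+2\sum_{k\ge2}\log\frac1{|\zeta_1-\zeta_k|}+n\sum_{k\ge2}Q(\zeta_k)+nQ(\zeta_1)$, and the dependence on $\zeta_1$ is exactly $2\sum_{k\ge2}\log\frac1{|\zeta_1-\zeta_k|}+nQ(\zeta_1)$; multiplying $e^{-\beta H_n}$ by $\prod_{i\ne1}|\zeta-\zeta_i|^{2\beta}/\prod_{i\ne1}|\zeta_1-\zeta_i|^{2\beta}\cdot e^{-\beta n(Q(\zeta)-Q(\zeta_1))}$ cancels the $\zeta_1$-terms and reinstates them with $\zeta$ in place of $\zeta_1$. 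This identity is the heart of the matter and should be verified carefully.

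Granting this, I would write
$$\bfE_n^\beta[\1_U(\zeta_1)\cdot Y_{1,W}]=\frac1{Z_n^\beta}\int_{\C^n}\1_U(\zeta_1)\left(\int_W|\ell_1(\zeta)|^{2\beta}\,dA(\zeta)\right)e^{-\beta H_n}\,dA_n,$$
then use Fubini to pull the $\zeta$-integral outside, apply the identity above, and obtain
$$\frac1{Z_n^\beta}\int_W\left(\int_{\C^n}\1_U(\zeta_1)\,e^{-\beta H_n(\zeta,\zeta_2,\dots,\zeta_n)}\,dA(\zeta_1)\cdots dA(\zeta_n)\right)dA(\zeta).$$
Now in the inner integral the variable $\zeta_1$ appears \emph{only} in the factor $\1_U(\zeta_1)$, so the $\zeta_1$-integration simply produces $|U|$, leaving
$$\frac{|U|}{Z_n^\beta}\int_W\int_{\C^{n-1}}e^{-\beta H_n(\zeta,\zeta_2,\dots,\zeta_n)}\,dA(\zeta_2)\cdots dA(\zeta_n)\,dA(\zeta).$$
Relabelling $\zeta$ as the first integration variable, this is exactly $|U|\cdot\bfP_n^\beta(\{\zeta_1\in W\})=|U|\cdot p_n^\beta(W)$, which is the asserted formula; the final equality $p_n^\beta(W)=\bfP_n^\beta(\{\zeta_j\in W\})$ for any $j$ is the exchangeability of the Gibbs measure.

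The main obstacle — really the only non-routine point — is establishing the substitution identity $|\ell_1(\zeta)|^{2\beta}e^{-\beta H_n(\zeta_1,\zeta_2,\dots)}=e^{-\beta H_n(\zeta,\zeta_2,\dots)}$ cleanly, including being careful about the null set where $\zeta_1$ coincides with some $\zeta_i$ (there $\ell_1$ is defined by the product formula and the denominator vanishes, but this set has measure zero and contributes nothing, while on the complement everything is finite and the cancellation is exact). One should also note the harmless point that $\ell_1$ as written is a ratio of two polynomials in $\zeta$ that are genuinely equal degree, so $|\ell_1(\zeta)|^{2\beta}$ is locally integrable and Fubini applies (the integrand is nonnegative, so Tonelli suffices and no integrability hypothesis beyond $Z_n^\beta<\infty$ is needed). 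Apart from this, the argument is a direct book-keeping computation, and it explains why the statement is a ``slight generalization'' of \cite[Lemma 2]{A}: the only change from $\beta=1$ is carrying the exponent $2\beta$ through, which the identity above accommodates automatically.
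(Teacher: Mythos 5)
Your proposal is correct and follows essentially the same route as the paper: the same substitution identity $|\ell_j(\zeta)|^{2\beta}e^{-\beta H_n(\zeta_1,\ldots,\zeta_j,\ldots,\zeta_n)}=e^{-\beta H_n(\zeta_1,\ldots,\zeta,\ldots,\zeta_n)}$ followed by Fubini--Tonelli, integrating out $\zeta_j$ to produce the factor $|U|$. The extra care you take with the null set where the denominator of $\ell_j$ vanishes and with the applicability of Tonelli is welcome detail but does not change the argument.
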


\begin{proof}
We start with the basic identity
\begin{equation*}\label{rem}|\ell_j(\zeta)|^{2\beta}e^{-\beta H_n(\zeta_1,\ldots,\zeta_j,\ldots,\zeta_n)}=e^{-\beta H_n(\zeta_1,\ldots,\zeta,\ldots,\zeta_n)}.\end{equation*}
By Fubini's theorem, integrating first in $\zeta_j$, we get
\begin{align*}
\int_W &dA(\zeta)\,\bfE_n^{\beta}\left[|\ell_j(\zeta)|^{2\beta}\cdot\1_{U}(\zeta_j)\right]\\
&=\int_{U}dA(\zeta_j)\int_{\zeta\in W,\,\zeta_k\in\C,\,(k\ne j)}d\bfP_n^{\beta}(\zeta_1,\ldots,\zeta,\ldots,\zeta_n)
=|U|\cdot p_n^\beta(W).\end{align*}
The proof is complete.
\end{proof}

The argument above may be iterated. To illuminate the principle, we start by considering the case of two different indices $j$ and $k$, $1\le j<k\le n$.

\begin{lem}\label{pl4} If $j<k$  and $U_1,U_2,W_1,W_2$ are any measurable subsets of $\C$ then
\begin{align*}\bfE_n^\beta \left[\1_{U_1}(\zeta_j)\cdot Y_{j,W_1}\cdot \1_{U_2}(\zeta_k)\cdot Y_{k,W_2} 
\right]
=|U_1||U_2|\cdot p_{n,2}^\beta(W_1,W_2),
\end{align*}
where
$$p_{n,2}^\beta(W_1,W_2):=\bfP_n^\beta(\{\zeta_1\in W_1,\,\zeta_2\in W_2\}).$$
\end{lem}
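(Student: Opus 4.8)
The plan is to mimic the proof of Lemma \ref{pl3}, but now integrate out two of the variables in succession, exploiting the fact that $j<k$ so that the two weighted Lagrange polynomials involve disjoint ``removed'' indices in a controlled way. The key algebraic input is a two-index version of the basic identity used in Lemma \ref{pl3}: for fixed distinct $j,k$,
\[
|\ell_j(\zeta)|^{2\beta}\,|\ell_k(\eta)|^{2\beta}\,e^{-\beta H_n(\dots,\zeta_j,\dots,\zeta_k,\dots)}
= e^{-\beta H_n(\dots,\zeta,\dots,\eta,\dots)},
\]
where on the left $\ell_j$ is formed from the original configuration and $\ell_k$ from the configuration with $\zeta_j$ already replaced by $\zeta$ — i.e.\ one must be a little careful about the order in which substitutions are performed, since $\ell_k$ depends on $\zeta_j$. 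This is exactly the ``iteration'' alluded to before the statement: first apply the single-swap identity at index $j$ to turn $|\ell_j(\zeta)|^{2\beta}e^{-\beta H_n}$ into $e^{-\beta H_n}$ with $\zeta_j$ replaced by $\zeta$, and then apply it again at index $k$ in the new configuration.

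Concretely, I would start from
\[
\bfE_n^\beta\!\left[\1_{U_1}(\zeta_j)\,Y_{j,W_1}\,\1_{U_2}(\zeta_k)\,Y_{k,W_2}\right]
= \frac{1}{Z_n^\beta}\int_{\C^n}\!\!\int_{W_1}\!\!\int_{W_2}
\1_{U_1}(\zeta_j)\1_{U_2}(\zeta_k)\,|\ell_j(\zeta)|^{2\beta}|\ell_k(\eta)|^{2\beta}\,e^{-\beta H_n}\,dA(\eta)\,dA(\zeta)\,dA_n,
\]
then use Fubini to integrate first in $\zeta_j$ over $U_1$: by the single-swap identity this replaces $\zeta_j$ by $\zeta$ everywhere and produces a factor $|U_1|$ (the $\zeta_j$-integrand, after the swap, no longer depends on $\zeta_j$). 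One is left with an integral over the remaining configuration, now containing $\zeta$ in slot $j$, of $\1_{U_2}(\zeta_k)\,|\ell_k(\eta)|^{2\beta}e^{-\beta H_n}$, where $\ell_k$ is the Lagrange polynomial of \emph{this} configuration. Integrating next in $\zeta_k$ over $U_2$ and applying the single-swap identity again yields the factor $|U_2|$ and leaves $\int_{W_1}\int_{W_2} d\bfP_n^\beta(\{\zeta_j\in W_1,\zeta_k\in W_2\})\,dA(\zeta)\,dA(\eta)$ divided by $Z_n^\beta$ — hmm, more precisely after normalization it is $\bfP_n^\beta(\{\zeta_j\in W_1,\zeta_k\in W_2\})$, which by exchangeability equals $p_{n,2}^\beta(W_1,W_2)$. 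Collecting factors gives $|U_1||U_2|\,p_{n,2}^\beta(W_1,W_2)$, as claimed.

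The main obstacle — really the only subtle point — is bookkeeping in the iteration: the second Lagrange polynomial $\ell_k$ must be interpreted relative to the configuration \emph{after} the first substitution, and one must check that the single-swap identity still applies verbatim in that configuration (it does, since it holds for any configuration and any index, and $j\ne k$ guarantees slot $k$ is untouched by the first swap). A secondary point is making the repeated Fubini legitimate, i.e.\ that everything is nonnegative and measurable, which is immediate here since all integrands are nonnegative. Once the order-of-substitution issue is handled cleanly, the computation is a direct two-step repetition of the proof of Lemma \ref{pl3}, and the obvious induction then delivers the $m$-index generalization for any $m\le n$.
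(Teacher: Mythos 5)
Your overall strategy --- iterate the swap identity of Lemma \ref{pl3} and integrate out $\zeta_j$, then $\zeta_k$, by Fubini --- is exactly the paper's, and you have correctly isolated the one delicate point: the swap identity at index $k$ holds for the Lagrange polynomial of the configuration \emph{after} the first substitution, not of the original one. But your resolution of that point does not close the gap; it is not mere bookkeeping. The random variable $Y_{k,W_2}$ in the statement is, by \eqref{rvs} and \eqref{lag}, built from $\ell_k$ of the \emph{original} configuration, which contains the factor $(\eta-\zeta_j)/(\zeta_k-\zeta_j)$. Hence after you apply the single-swap identity at index $j$, the integrand still depends on $\zeta_j$ through $|\ell_k(\eta)|^{2\beta}$, so your assertion that ``the $\zeta_j$-integrand, after the swap, no longer depends on $\zeta_j$'' is false and the factor $|U_1|$ does not come out. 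Writing $\tilde\ell_k(\eta)$ for the Lagrange polynomial of the swapped configuration (with $\zeta$ in slot $j$), one has
\[
|\ell_j(\zeta)\,\ell_k(\eta)|^{2\beta}
=|\ell_j(\zeta)\,\tilde\ell_k(\eta)|^{2\beta}\cdot
\Bigl|\tfrac{(\eta-\zeta_j)(\zeta_k-\zeta)}{(\zeta_k-\zeta_j)(\eta-\zeta)}\Bigr|^{2\beta},
\]
and only the first factor on the right obeys the two-index swap identity; the second depends on $\zeta_j,\zeta_k$ and does not integrate to $1$ over $U_1\times U_2$.

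In fact, under the literal reading the identity fails: for $n=2$, $\beta\ge 1$, $W_1=W_2=\C$ and $U_1=U_2$ a disc, the left-hand side contains $\int_{U_1\times U_2}|\zeta_1-\zeta_2|^{-2\beta}\cdots$ and diverges, while the right-hand side equals $|U_1||U_2|$. The statement that your argument actually proves --- and the one needed in Section \ref{Sec3} --- replaces $Y_{j,W_1}\cdot Y_{k,W_2}$ by the ``sequential'' two-point object $\int_{W_1\times W_2}|\ell_j(\zeta)\tilde\ell_k(\eta)|^{2\beta}\,dA_2(\zeta,\eta)$. For that object your two-step Fubini computation is correct, and the pointwise bounds of Lemmas \ref{pl2} and \ref{lem2} still apply, since $\ell_j(\zeta)\tilde\ell_k(\eta)$ is a weighted polynomial of degree at most $n-1$ in each variable separately (the pole $(\zeta_k-\zeta)^{-1}$ cancels against the factor $(\zeta-\zeta_k)$ of $\ell_j$) and equals $1$ at $(\zeta,\eta)=(\zeta_j,\zeta_k)$. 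You should either restate the lemma for this modified object or exhibit and control the correction factor explicitly; as written, your first display (equating the lemma's left-hand side with a triple integral containing the original-configuration $\ell_k$) and your subsequent extraction of $|U_1|$ cannot both be correct. For what it is worth, the paper's own two-line proof glosses over exactly the same point, so you are in good company --- but flagging the subtlety and then waving it away does not resolve it.
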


\begin{proof} Note that
\begin{equation*}|\ell_j(\zeta)|^{2\beta}|\ell_k(\eta)|^{2\beta}e^{-\beta H_n(\zeta_1,\ldots,\zeta_j,\ldots,\zeta_k,\ldots,\zeta_n)}=e^{-\beta H_n(\zeta_1,\ldots,\zeta,\ldots,\eta,\ldots,\zeta_n)}.\end{equation*}
with $\zeta$ and $\eta$ in positions $j$ and $k$, respectively. The lemma follows by using Fubini's theorem as in the preceding proof.
\end{proof}

More generally, if $\{j_1,\ldots,j_m\}$ is any subset of $\{1,\ldots,n\}$
with $j_1<j_2<\cdots<j_m$ then with a self-explanatory notation
\begin{equation}\label{inter}\begin{split}\bfE_n^\beta&\left[\1_{U_1}(\zeta_{j_1})\cdot Y_{j_1,W_1}\cdot \1_{U_2}(\zeta_{j_2})\cdot Y_{j_2,W_2}\cdots \1_{U_m}(\zeta_{j_m})\cdot Y_{j_m,W_m}\right]
\\
&\qquad\qquad\qquad\qquad=|U_1||U_2|\cdots |U_m|\cdot p_{n,m}^\beta(W_1,\ldots W_m).\\
\end{split}
\end{equation}
The proof of this formula is straightforward and is omitted.

In the following we will consider the case when all the sets $W_j$ coincide with a set $W$, in which case we will use the abbreviation
$$p_{n,m}^\beta(W):=p_{n,m}^\beta(W,W,\ldots,W).$$
(``$W$'' occurs $m$ times in the right hand side.)


\section{Proof of Theorem \ref{DTH}} \label{Sec3}

To begin, we fix a positive number $c<c_0$ and a small enough open neighbourhood $\Omega$ of $S$ such that $c\delta(\zeta)^2<a_0$ for each $\zeta\in \Omega$.
(Here $c_0$ is fixed as in \eqref{cdef} and $a_0$ is the constant from Lemma \ref{lem1}.) We assume also that $\Omega$ is small enough that Lemma \ref{pl2} applies with $V=\Omega$.

Now fix a point $\zeta_0\in\Omega$ and a number $\eps>0$ which is small enough that the neighbourhood $W=D(\zeta_0;\eps)$ is contained in $\Omega$.

Next define a non-negative number $\delta$ by
$$\delta:=\dist(S,W)=\inf\{|\zeta-\eta|;\, \zeta\in S,\, \eta\in W\}.$$
Thus $\delta\to\delta(\zeta_0)$ as $\eps\to 0$.

For $1\le j\le n$ we consider the random variables
\begin{align*}Y_j&=\int_W|\ell_j|^{2\beta}\, dA,\\
 Z_j&=\int_{\C}|\ell_j|^{2\beta}\, dA.
 \end{align*}

By Lemma \ref{pl2} we have the global estimate
$$|\ell_j(\zeta)|^{2\beta}\le ne^{\,\propco\beta}\int_{D(\zeta,1/\sqrt{n})}|\ell_j|^{2\beta}\le ne^{\,\propco\beta}Z_j,\qquad \zeta\in\Omega.$$
In particular this holds for all $\zeta\in S$, so since $\ell_j\in\calW_n$ we obtain from lemmas \ref{lem2} and \ref{lem1} that
$$|\ell_j(\zeta)|^{2\beta}\le ne^{\,\propco\beta}e^{-cn\beta \delta^2}Z_j,\qquad \zeta\in W.$$

Integrating the latter inequality over $W$ using that $|W|=\eps^2$, we find that
\begin{equation}\label{chug}Y_j\le ne^{\,\propco\beta}e^{-cn\beta \delta^2}\eps^2Z_j.\end{equation}

Now fix a measurable subset $U$ of positive, finite measure and recall from Lemma \ref{pl3} that
\begin{align*}\bfE_n^\beta\left[\1_U(\zeta_j)\cdot Y_j\right]&=|U|\cdot p_n^\beta(W),\\
 \bfE_n^\beta\left[\1_U(\zeta_j)\cdot Z_j\right]&=|U|\cdot p_n^\beta(\C)=|U|.
 \end{align*}

If we multiply through in \eqref{chug} by $\1_U(\zeta_j)$ and then take expectations, we obtain the inequality
\begin{equation}\label{werke}p_n^\beta(W)\le 
ne^{\,\propco\beta}e^{-cn\beta \delta^2}\eps^2.\end{equation}

Next define for $j\ne k$
\begin{align*}Y_{j,k}&=\int_{W^2} |\ell_j(\zeta)\ell_k(\eta)|^{2\beta}\, dA_2(\zeta,\eta),\\
Z_{j,k}&=\int_{\C^2} |\ell_j(\zeta)\ell_k(\eta)|^{2\beta}\, dA_2(\zeta,\eta).
\end{align*}
Applying the argument above, we find first that
$$|\ell_j(\zeta)\ell_k(\eta)|^{2\beta}\le (ne^{\,\propco\beta}e^{-cn\beta \delta^2})^2Z_{j,k},\qquad \zeta,\eta\in W,$$
and then, by integrating over $W^2$,
$$Y_{j,k}\le (ne^{\,\propco\beta}e^{-cn\beta \delta^2})^2Z_{j,k}\eps^4.$$
Taking expectations in this inequality and using Lemma \ref{pl4} it now follows that
$$p_{n,2}^\beta(W)\le (ne^{\,\propco\beta}e^{-cn\beta \delta^2})^2p_{n,2}^\beta(\C)\eps^4=(ne^{\,\propco\beta}e^{-cn\beta \delta^2})^2\eps^4.$$

By the same token, using the identity \eqref{inter}, we obtain for each $k$, $1\le k\le n$ that
$$p_{n,k}^\beta(W)\le q^kp_{n,k}^\beta(\C)\eps^{2k}=q^k\eps^{2k}\qquad \text{where}\qquad
q=ne^{\,\propco\beta}e^{-cn\beta \delta^2}.$$
From this, we infer that
\begin{align*}&\bfE_n^\beta(\#(W\cap\sample))=\bfP_n^\beta(\{\#(W\cap\sample)=1\})\\
&\quad+2\bfP_n^\beta(\{\#(W\cap\sample)=2\})+\cdots+n\bfP_n^\beta(\{\#(W\cap\sample)=n\})\\
&\quad \le {n \choose  1}\,q\,\eps^2+2\,{n\choose 2}\,q^2\,\eps^4+\cdots+n\,{n\choose n}\, q^n\,\eps^{2n}.
\end{align*}

We finally conclude that
\begin{align*}\bfR_n^\beta(\zeta_0)=\lim_{\eps\to 0}\frac {\bfE_n^\beta(\#(W\cap\sample))}{\eps^2}\le n^2e^{\,\propco\beta}e^{-cn\beta \delta(\zeta_0)^2},\end{align*}
as desired. Q.E.D.


\section{Proofs of Theorem \ref{TH1} and Theorem \ref{mprop}} \label{Sec_proof}

We start by fixing a sequence $(U_n)_{n=1}^\infty$ of bounded open neighbourhoods of the droplet $S$, which is increasing and exhausts $\C$, \viz
$$S\subset U_1\subset U_2\subset\cdots,\qquad  \bigcup_{n=1}^\infty U_n=\C.$$

In the sequel we fix an unspecified integer $n_0$ which can be chosen larger as we go along, and we assume that $n\ge n_0$.

For definiteness, we will fix $U_n$ to be the disc of radius $\log n$ about the origin,
\begin{equation}\label{undef}U_n:=D(0;R_n),\qquad  R_n:=\log n,\qquad (n\ge n_0).\end{equation}

Now fix $j$, $1\le j\le n$, and put
$$X_j=\int_\C |\ell_j|^{2\beta}\, dA.$$
By Lemma \ref{pl3} we know that
\begin{equation}\label{zoo}\bfE_n^\beta\left[\1_{U_n}(\zeta_j)\cdot X_j\right]=|U_n|=R_n^2.\end{equation}

Now introduce the two events
\begin{equation}\label{events}A_j:=\{\zeta_j\in U_n\},\qquad\qquad  B_j:=\{X_j\le \lambda\},\end{equation}
where $\lambda>0$ is a parameter.

By Chebyshev's inequality and \eqref{zoo} we have the basic estimate
\begin{align*}\bfP_n^\beta(A_j\cap B_j^c)&\le \frac {R_n^2}{\lambda}.\end{align*}
Passing to complements we conclude that
\begin{equation}\label{tv}\bfP_n^\beta(A_j^c)+\bfP_n^\beta(B_j)\ge 1-\frac {R_n^2}{\lambda}.\end{equation}

We shall now prove that the probability $\bfP_n^\beta(A_j^c)$ is ``negligible''.

\begin{lem} \label{lem3}
There are constants $\param>0$ and $n_0>0$ such that $n\ge n_0$ implies
$$\bfP_n^\beta(A_j^c)\le R_n^{-\beta \param n},\qquad 1\le j\le n.$$
\end{lem}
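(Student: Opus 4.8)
The plan is to show that the event $A_j^c = \{\zeta_j \notin U_n\}$, i.e. that a given particle escapes the disc $D(0;R_n)$ of radius $R_n = \log n$, has probability decaying like a power of $R_n$ with exponent proportional to $\beta n$. The key tool is Lemma \ref{pl3} applied with a judiciously chosen test set $U$, combined with the maximum-principle estimate of Lemma \ref{lem2} (via the effective potential $\Qeff$) to control $|\ell_j|$ far from the droplet, and Lemma \ref{pl2} to turn pointwise bounds into integral ($L^{2\beta}$) bounds.

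Concretely, first I would take $U$ in Lemma \ref{pl3} to be a small fixed disc inside the interior of the droplet (or inside $V$), so that $|U| > 0$ is a fixed constant; then $\bfE_n^\beta[\1_U(\zeta_j) Y_{j,W}] = |U| \cdot p_n^\beta(W)$ with $W = U_n^c$. Choosing instead $W = \C$ gives $\bfE_n^\beta[\1_U(\zeta_j) X_j] = |U|$, hence by Markov's inequality $\bfP_n^\beta(\{\zeta_j \in U\} \cap \{X_j > \lambda\}) \le |U|/\lambda$. The real content, though, is to bound $Y_{j, U_n^c} = \int_{U_n^c} |\ell_j|^{2\beta}\,dA$ pointwise: since $\ell_j \in \calW_n$, Lemma \ref{lem2} gives $|\ell_j(\zeta)|^{2\beta} \le \|\ell_j\|_{L^\infty(S)}^{2\beta} e^{-n\beta \Qeff(\zeta)}$, and by Lemma \ref{pl2} the sup norm over $S$ is controlled by $n e^{s\beta} X_j$ (up to constants). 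On $U_n^c = \{|\zeta| \ge \log n\}$, the growth assumption \eqref{gro} forces $\Qeff(\zeta) = Q(\zeta) - \check Q(\zeta) \gtrsim \log|\zeta|^2$ for $|\zeta|$ large (since $\check Q(\zeta) = 2\log|\zeta| + O(1)$ while $Q(\zeta) \ge \alpha \cdot 2\log|\zeta|$ with $\alpha > 1$), so $e^{-n\beta\Qeff(\zeta)} \le |\zeta|^{-2\beta n(\alpha-1)}$ roughly, which is integrable over $U_n^c$ and contributes a factor like $R_n^{-\beta n \cdot \text{const}} = (\log n)^{-\beta \param n}$. Combining, $Y_{j,U_n^c} \le C n e^{s\beta} X_j \cdot R_n^{-\beta\param' n}$ for a suitable $\param' > 0$, absorbing the polynomial-in-$n$ factors into the exponent by enlarging $n_0$ and slightly shrinking the constant.

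Then I would run the same complementation trick as in the lead-up to \eqref{werke}: applying Lemma \ref{pl3} with $W = U_n^c$, $\bfE_n^\beta[\1_U(\zeta_j) Y_{j,U_n^c}] = |U| \cdot p_n^\beta(U_n^c) = |U| \cdot \bfP_n^\beta(A_j^c)$, and the pointwise domination $Y_{j,U_n^c} \le C n e^{s\beta} R_n^{-\beta\param' n} X_j$ together with $\bfE_n^\beta[\1_U(\zeta_j) X_j] = |U|$ yields $\bfP_n^\beta(A_j^c) \le C n e^{s\beta} R_n^{-\beta\param' n}$. Choosing $\param$ slightly below $\param'$ and $n_0$ large enough that $C n e^{s\beta} \le R_n^{\beta(\param'-\param)n}$ (which holds eventually since the right side grows faster than any power of $n$) gives the claimed bound $\bfP_n^\beta(A_j^c) \le R_n^{-\beta\param n}$.

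The main obstacle I anticipate is making the estimate $\Qeff(\zeta) \gtrsim \log|\zeta|^2$ on $U_n^c$ quantitative and uniform enough, and handling the transition region: on an annulus like $\{R_0 \le |\zeta| \le R_n\}$ for fixed $R_0$, $\Qeff$ is only known to be bounded below by a fixed positive constant (by \eqref{po} and lower semicontinuity, as in Lemma \ref{lem1}), not by something growing; but since this annulus has area $\lesssim R_n^2 = (\log n)^2$ and $e^{-n\beta \Qeff} \le e^{-2 n\beta a_0}$ there by Lemma \ref{lem1}, this piece is even smaller, of order $(\log n)^2 e^{-2a_0\beta n}$, which is dominated by $R_n^{-\beta\param n}$ for $n_0$ large. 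So the bound $\int_{U_n^c} e^{-n\beta\Qeff}\,dA$ really is governed by the far tail, and one just needs to split $U_n^c$ into this fixed annulus and the genuine far region $\{|\zeta| \ge R_n\}$ and estimate each separately — routine once set up, but worth writing carefully to get the constants right.
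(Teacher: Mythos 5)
Your proof is correct, but it takes a genuinely different route from the paper's. The paper proves this lemma by writing $\bfP_n^\beta(A_j^c)$ in terms of the ratio $Z_{n-1}^\beta/Z_n^\beta$, using the growth inequality \eqref{nyeq} to gain the factor $R_n^{-\beta h_1 n}$, and then establishing the upper bound $Z_{n-1}^\beta/Z_n^\beta\le Ce^{\beta h_2 n}$ via Jensen's inequality and the Johansson-type convergence theorem — this is in fact the only place where the appendix is needed. You instead stay entirely inside the weighted-Lagrange-polynomial framework of Section \ref{Sec2}: the exact identity of Lemma \ref{pl3} with $W=U_n^c$ and $W=\C$, the pointwise-$L^{2\beta}$ bound $\|\ell_j\|_{L^\infty(S)}^{2\beta}\le ne^{s\beta}X_j$ from Lemma \ref{pl2}, the maximum principle of Lemma \ref{lem2}, and the lower bound $\Qeff(\zeta)\ge (\alpha-1)\log|\zeta|^2+O(1)$ coming from \eqref{gro} and $\check Q=\log|\zeta|^2+O(1)$, which makes $\int_{U_n^c}e^{-n\beta\Qeff}\,dA\lesssim R_n^{2-(\alpha-1)n\beta}$; the polynomial prefactors are then absorbed using \eqref{ass0} since $R_n^{\epsilon n\beta}=e^{\epsilon n\beta\log\log n}$ grows faster than any power of $n$. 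This is self-contained, arguably more elementary, and avoids the partition-function asymptotics and the appendix altogether; what it does not give (and what the paper's route supplies as a by-product) is the bound \eqref{joh} on $Z_{n-1}^\beta/Z_n^\beta$, which has independent interest. One small correction: your worry about a ``transition annulus'' $\{R_0\le|\zeta|\le R_n\}$ is moot, since that annulus lies in $U_n$, not in $U_n^c=\{|\zeta|\ge R_n\}$; for $n\ge n_0$ the entire set $U_n^c$ is already in the far region where the logarithmic lower bound on $\Qeff$ holds, so no splitting is needed.
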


\begin{proof} We will give a proof based on estimates for the partition function which can essentially be found in the union of the papers \cite{HM,J}. (\cite{J} is written in the setting of a real $\log$-gas, but the following argument is virtually the same in the complex case.)

\smallskip

We first note, due to the growth assumption \eqref{gro} on $Q$, that there are numbers $\pa>0$ and $n_0>0$ such that
for all $\zeta,\eta\in\C$ with $|\eta|>R_{n_0}$ we have
\begin{equation}\label{nyeq}\log \frac1 {|\zeta-\eta|^2}+Q(\zeta)+Q(\eta)>\pa\log|\eta|.\end{equation}
(To see this, use the elementary inequality $|\zeta-\eta|^2\le(1+|\zeta|^2)(1+|\eta|^2)$.)

\smallskip

Recalling the definition of the partition function $Z_n^\beta$ in \eqref{pf}, we now
write
\begin{align*}\bfP_n^\beta &(A_j^c)=\bfP_n^\beta(A_n^c)=\bfP_n^\beta(\{\zeta_n\in U_n^c\})=
\frac {Z_{n-1}^\beta}{Z_n^\beta}\int_{U_n^c}e^{-n\beta Q(\zeta_n)}dA(\zeta_n)\times\\
&\times\int_{\C^{n-1}}\exp\left(-\beta\sum_{j=1}^{n-1}\left[\log\frac 1 {|\zeta_j-\zeta_n|^2}+Q(\zeta_j)\right]\right)\, d\bfP_{n-1}^\beta(\zeta_1,\ldots,\zeta_{n-1}).
\end{align*}

Using \eqref{nyeq} we now obtain
\begin{align*}\bfP_n^\beta (A_j^c)&\le
\frac {Z_{n-1}^\beta}{Z_n^\beta}\int_{U_n^c}e^{-\beta Q(\zeta_n)}e^{-\beta(n-1)\pa \log|\zeta_n|}\, dA(\zeta_n)\\
&\le \frac {Z_{n-1}^\beta}{Z_n^\beta} R_n^{-\beta \pa(n-1)/2}\int_{U_n^c}e^{-\beta Q(\zeta_n)}|\zeta_n|^{-\beta\pa(n-1)/2}\, dA(\zeta_n).
\end{align*}

We conclude that there are constants $C$ and $h_1>0$ such that
\begin{equation}\label{lower}\bfP_n^\beta (A_j^c)\le C\frac {Z_{n-1}^\beta}{Z_n^\beta}R_n^{-\beta h_1 n}.\end{equation}

We shall thus be done when we can prove an upper bound of the form
\begin{equation}\label{joh}\frac {Z_{n-1}^\beta}{Z_n^\beta}\le  Ce^{\,\beta h_2n}.\end{equation}

For fixed $\beta$ this is shown in Section 5 of \cite{CHM} (see eq. (5.4)), and it also
 follows formally from a well-known large $n$ expansion of the partition function in \cite{ZW}. As we want to
 ensure a certain uniformity in $\beta$, we shall here
give an elementary proof based on the papers \cite{HM,J}.

We start with the identity
\begin{equation}\label{ko}\frac {Z_n^\beta}
{Z_{n-1}^\beta}=\bfE_{n-1}^\beta\left[\int_\C\exp\left(\beta\sum_{j=1}^{n-1}\log|t-\zeta_j|^2-n\beta Q(t)\right)\, dA(t)\right].\end{equation}

Now write $I:=\int_\C e^{-Q}$ and use Jensen's inequality to conclude that the last expression is
\begin{align}\label{op}
\ge I\cdot \exp\left\{\bfE_{n-1}^\beta\left[\int_\C\left(\beta\sum_{j=1}^{n-1}\log|t-\zeta_j|^2-(\beta n-1)Q(t)\right)\, \frac {e^{-Q(t)}} I \,dA(t)\right]\right\}.
\end{align}

We must estimate this expression from below. For this, we start by estimating the number
\begin{align*}m_n:&=\frac 1 n \bfE_{n-1}^\beta\left[\int_\C\sum_{j=1}^{n-1}\log|t-\zeta_j|^2\, e^{-Q(t)}\, dA(t)\right]\\
&\ge \frac 1 n \bfE_{n-1}^\beta\left[\int_\C\sum_{j=1}^{n-1}\log_-|t-\zeta_j|^2\, e^{-Q(t)}\, dA(t)\right]\end{align*}
where $\log_-x=\min\{\log x,0\}$.

Now fix a number $\delta$, $0<\delta<1$ so that $\int_{|t-\zeta|<\delta}\log |t-\zeta|^2\, e^{-Q(t)}\, dA(t)>-1$ for each $\zeta\in\C$.
Then
\begin{align*}m_n&> -1+\frac 1 n \bfE_{n-1}^\beta\left[\sum_{j=1}^{n-1}\int_{|t-\zeta_j|\ge\delta}\log_-|t-\zeta_j|^2\, e^{-Q(t)}\, dA(t)\right]\\
&>-1+2\log\delta+\frac 1 n \bfE_{n-1}^\beta\left[\int_{\C}\sum_{j=1}^{n-1}l_\delta(t-\zeta_j)\, e^{-Q(t)}\, dA(t)\right],
\end{align*}
where $l_\delta(\zeta):=2\max\{\log_- |\zeta|,\log\delta\}$.

Now as $l_\delta$ is bounded and continuous on $\C$
we can apply the convergence in
\eqref{eqc} to obtain \begin{equation}\label{into}\frac 1 n\bfE_{n-1}^\beta(l_\delta(t-\zeta_1)+\cdots+l_\delta(t-\zeta_{n-1}))\to \int_\C l_\delta(t-\zeta)\, d\sigma(\zeta)\end{equation} for each $t\in\C$.

In fact, an examination of the proof of the convergence \eqref{eqc} in \cite{HM,J} holds uniformly in $t$ provided that $\beta=\beta_n$ satisfies $\beta_nn\to\infty$ 
as
$n\to\infty$.
For convenience of the reader, we have collected the relevant details in the appendix; see in particular Theorem \ref{joh1}.


Integrating both sides of the limit \eqref{into} with respect to the measure $e^{-Q(t)}\, dA(t)$ we infer that the sequence $m_n$ is uniformly bounded from below when $\beta \gg \frac {\log n} n$, which is all that we need to know here.

In view of Lemma \ref{LemF}, we now obtain by \eqref{ko} and \eqref{op} that
a bound of the form \eqref{joh} must hold.

The proof of the lemma is complete.
\end{proof}

\begin{rem} If we slightly strengthen our assumptions on the potential, then
the above uniform convergence in \eqref{into} also follows from a quantitative result in \cite[Theorem 1.5]{CHM}. (The function $l_\delta$ belongs to the bounded Lipschitz class used there.)
\end{rem}

\subsection{Proof of Theorem \ref{TH1}}
Now fix an $n\ge n_0$ and recall that $B_j=\{X_j\le\lambda\}$ where $\lambda>0$ is fixed. (See \eqref{events}.)

It follows from Lemma \ref{lem3} and the inequality \eqref{tv} that
$$\bfP_n^\beta(B_j)\ge 1-R_n^2 \lambda^{-1}-R_n^{-\beta \param n},\qquad j=1,\ldots, n.$$

In order to be able to neglect the last term, we shall impose the following (mild) condition on the parameter $\lambda$,
\begin{equation}\label{paradrop}\lambda\lesssim R_n^{\beta\param n/2}.
\end{equation}

Under this assumption we have
$$\bfP_n^\beta(B_j)\ge 1-KR_n^2 \lambda^{-1}$$
for some constant $K$.
Hence if we
write
$$B:=\bigcap_{j=1}^n B_j,$$
then
\begin{align}\label{skans}\bfP_n^\beta(B)\ge 1-\epsilon,
\end{align}
where we have put
\begin{align}\label{epsdef}\epsilon:=KnR_n^2\lambda^{-1}.\end{align}

\smallskip

In the sequel we assume that $\zeta$ belongs to the fixed
neighbourhood $V$ of $S$. Pick a random sample $\{\zeta_j\}_1^n$.

\smallskip

By Lemma \ref{pl2} we have that
\begin{align*}|\ell_j|^{2\beta}(\zeta)&\le ne^{\,\propco \beta} X_j,\qquad
(X_j=\int|\ell_j|^{2\beta}).\end{align*}

Thus assuming that $B_j$ has occurred, we obtain
\begin{equation}\label{laban}\|\ell_j\|_{L^\infty(S)}^{2\beta}\le ne^{\,\propco\beta}\lambda.
\end{equation}

We now apply
Lemma \ref{lem2} to conclude that
$$|\ell_j(\zeta)|^{2\beta}\le ne^{\,\propco\beta}\lambda \cdot e^{-n\beta \Qeff(\zeta)},\qquad (\zeta\in\C).$$

In view of Lemma \ref{lem1}
we deduce that if $\zeta\in S^c$ then
\begin{equation}\label{Gugge}|\ell_j(\zeta)|^{2\beta}\le ne^{\,\propco \beta}\lambda
\exp\left(-2n\beta \min\left\{c\delta(\zeta)^2,\an\right\}\right).\end{equation}

\smallskip

To proceed, we now impose the further restriction on $\lambda$ that
\begin{equation}\label{resl}\lambda\lesssim e^{n\beta \an}.\end{equation}
We also fix an arbitrary number $\alpha\in(0,1)$ and choose $n_0$ large enough that $n\ge n_0$ implies
\begin{equation}\label{restr}ne^{\,\propco\beta}\lambda e^{-2n\beta \an}<\alpha.\end{equation}
This is possible by \eqref{resl} since \eqref{ass0} implies that $n\beta a_0> \log n$ for large $n$.

\smallskip

For a fixed $n\ge n_0$ we now
consider the set $M_n\subset S^c$ of points $\zeta$ such that
\begin{equation}\label{2}ne^{\,\propco \beta}\lambda e^{-2c\beta n\delta(\zeta)^2}\le \alpha.\end{equation}
Note that if $B_j$ has occurred then certainly $\zeta_j\not \in M_n$, for otherwise \eqref{Gugge} would imply $$\alpha\ge |\ell_j(\zeta_j)|^{2\beta}=1>\alpha.$$
Thus with probability
at least $1-\epsilon$ the entire Coulomb gas is actually contained in the neighbourhood $M_n^c$ of $S$.

\smallskip

To finish the proof we observe that our restrictions on $\lambda$ are equivalent to that the parameter $\epsilon$ (cf. \eqref{epsdef}) satisfy
\begin{equation}\label{nyenskans}\epsilon\gtrsim nR_n^2 e^{-n\beta \an}\qquad \text{and}\qquad \epsilon\gtrsim
nR_n^2 e^{-\beta\param n/2}.\end{equation}
For such $\epsilon$ we now write
$$\lambda=\frac {n\nu_n}\epsilon,\qquad \text{where}\qquad \nu_n=KR_n^2+o(1).$$
The inequality \eqref{2} is then equivalent to that
$$n^2e^{\,\propco\beta}\frac {\nu_n} \epsilon \le \alpha e^{2c\beta n\delta^2},\qquad (\delta=\delta(\zeta)),$$
which transforms to
$$\delta^2\ge \frac 1 {2c\beta n}\left(\log \frac 1 \alpha+\propco\beta+2\log n+\log \frac {\nu_n}\epsilon\right).$$
Thus if we define
$$\delta_n=\sqrt{\frac 1 {2c\beta n}\left(\log \frac 1 \alpha+\propco\beta+ 2\log n+\log \frac {\nu_n}\epsilon\right)}$$
then certainly
$$M_n\subset\{\zeta\in\C;\, \delta(\zeta)\ge \delta_n\}.$$
so the Coulomb gas is with probability at least $1-\epsilon$ contained in the $\delta_n$-neighbourhood of $S$. In symbols, we have shown that
$$\bfP_n^\beta\left(\left\{D_n\ge \delta_n\right\}\right)\le \epsilon.$$

If we write $\epsilon=e^{-t}$ and $\mu_n=\log\nu_n+\log(1/\alpha)+\propco\beta$ this becomes
\begin{equation}\label{fjun}\bfP_n^\beta\left(\left\{D_n>\sqrt{\frac 1 {c\beta n}\left(\log n+\mu_n/2+t/2\right)}\right\}\right)\le e^{-t}.\end{equation}

Now notice that $\log\nu_n\asymp \log\log n$ as $n\to\infty$, so $\mu_n\asymp \log\log n+\beta$.

We have shown \eqref{fjun} under the hypothesis \eqref{nyenskans}. Since we have assumed that $\beta\cdot n\to\infty$ as $n\to\infty$, \eqref{nyenskans}
surely holds if $t\le a\beta n$ for a small enough $a>0$.

The proof of Theorem \ref{TH1} is complete. Q.E.D.

\subsection{Proof of Theorem \ref{mprop}} It is now easy to modify the above proof so as to also prove Theorem \ref{mprop}.

Fix a number $r_0$ with $cr_0^2\ge \an$
and suppose that $\delta(\zeta)\ge r\ge r_0$. The estimate \eqref{Gugge} then takes the form
\begin{equation}\label{fjutt}|\ell_j(\zeta)|^{2\beta}\le ne^{\,\propco\beta}\lambda e^{-2n\beta k_0},\end{equation}
where $k_0$ is some constant with $k_0\ge \an$.
A glance at the proof of Lemma \ref{lem1} shows that we may take $2k_0=\min\{\Qeff(\zeta);\, \delta(\zeta)\ge r\}$.


For a fixed $\alpha\in (0,1)$ we now choose $n_0$ large enough that $n\ge n_0$ implies
$$ne^{\,\propco\beta}\lambda e^{-2n\beta k_0}\le \alpha.$$
This can certainly be done if we assume that $\lambda\lesssim e^{n\beta k_0}$, in view of \eqref{ass0}.

If the event $B_j=\{X_j\le \lambda\}$ has occurred then $\delta(\zeta_j)< r$, for otherwise $\alpha\ge |\ell_j(\zeta_j)|^{2\beta}=1>\alpha$.

Thus if we define, as before, $\epsilon=KnR_n^2/\lambda$ for a suitable $K>0$, then $\{\zeta_j\}_1^n\subset \{\delta(\zeta)<r\}$ with probability at least $1-\epsilon$, or in other words
$$\bfP_n^\beta(\{D_n\ge r\})\le\epsilon.$$
This is proven whenever $\epsilon\gtrsim nR_n^2e^{-\beta k_0 n}$. Taking $\epsilon= e^{-\beta kn}$ where $k=k_0/2$, we
finish the proof. Q.E.D.

\section{Concluding remarks} \label{concrem}

In this section, shall generalize our main results, by allowing for perturbations of the form $u/n$ where
$u$ is a suitable function.
After that, we will comment on related results and say something about future prospects.

\subsection{Perturbations of real-analytic potentials} \label{pertpot} Let us fix a potential $Q$ obeying the
conditions \ref{A0}-\ref{E}.

Now pick an arbitrary bounded, measurable, real-valued function $u$
and consider the $n$-dependent potential
$$V_n(\zeta)=Q(\zeta)+\frac 1 n u(\zeta).$$

In a ``classical'' meaning, the potentials $Q$ and $V_n$ are indistinguishable; their droplets and equilibrium measures are the same.
The difference between them appears
on the statistical level, when we introduce the Gibbs measure corresponding to $V_n$,
$$d\bfP_n^\beta\propto e^{-\beta H_n}\, dA_n,\qquad H_n:=\sum_{j\ne k}^n\log\frac 1 {|\zeta_j-\zeta_k|}+n\sum_{j=1}^n V_n(\zeta_j).$$

More precisely, the weakly $n$-dependent term ($u/n$) affects the distribution of particles near the boundary.

The present more general situation can be treated similarly as before, by redefining the class $\calW_n$ of weighted polynomials
to consist of elements of the form
$$f=q\cdot e^{-nV_n/2},$$
where $q$ is a holomorphic polynomial of degree at most $n-1$. Again we consider this as a subspace of $L^2$.

To an element $f=q\cdot e^{-nV_n/2}\in\calW_n$ we associate the weighted polynomial $\tilde{f}=q\cdot e^{-nQ/2}=f\cdot e^{u/2}$.
Since $u$ is in $L^\infty(\C)$ we have
\begin{equation}\label{asymp}|f(\zeta)|\asymp |\tilde{f}(\zeta)|\end{equation}
uniformly in $f$ and in $\zeta$.

\begin{mth} \label{TH2} Under the above hypotheses, Theorem \ref{DTH} holds up to a constant, i.e., the 1-point function of the ensemble in
potential $V_n$ satisfies $\bfR_n^\beta\lesssim n^2e^{\, s\beta}e^{-cn\beta\delta(\zeta)^2}$. Moreover, Theorem \ref{TH1} and Theorem \ref{mprop} remain in force
as stated (but with $u$-dependent $O$-constants).
\end{mth}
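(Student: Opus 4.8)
The plan is to observe that the perturbed ensemble in potential $V_n = Q + u/n$ is, in terms of the weighted polynomials driving the whole argument, a bounded-factor distortion of the unperturbed one, and then to re-run the proofs of Sections \ref{Sec3} and \ref{Sec_proof} verbatim, tracking where the factor $e^{\pm\|u\|_\infty}$ enters. First I would set up the weighted Lagrange polynomials $\ell_j$ exactly as in \eqref{lag} but with $Q$ replaced by $V_n$, and note the key algebraic identity $|\ell_j(\zeta)|^{2\beta}e^{-\beta H_n(\ldots,\zeta_j,\ldots)} = e^{-\beta H_n(\ldots,\zeta,\ldots)}$ still holds (it only uses that $V_n$ is a sum over single particles), so Lemmas \ref{pl3}, \ref{pl4}, and the iterated identity \eqref{inter} carry over unchanged. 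The crucial point is \eqref{asymp}: for $f = q e^{-nV_n/2}\in\calW_n$ and $\tilde f = q e^{-nQ/2}$ we have $e^{-\|u\|_\infty/2}|\tilde f|\le |f|\le e^{\|u\|_\infty/2}|\tilde f|$ pointwise, uniformly.

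Next I would push \eqref{asymp} through Lemmas \ref{lem2} and \ref{pl2}. For the maximum principle (Lemma \ref{lem2}): since $\tilde f\in\calW_n$ in the \emph{original} sense, we have $|\tilde f(\zeta)|\le \|\tilde f\|_{L^\infty(S)}e^{-n\Qeff(\zeta)/2}$, and combining with \eqref{asymp} on both ends yields $|f(\zeta)|\le e^{\|u\|_\infty}\|f\|_{L^\infty(S)}e^{-n\Qeff(\zeta)/2}$ — the same statement up to the multiplicative constant $e^{\|u\|_\infty}$. The sub-mean-value estimate of Lemma \ref{pl2} is even easier: applying it to $\tilde f$ and using \eqref{asymp} gives $|f(\zeta_0)|^{2\beta}\le n e^{2\beta\|u\|_\infty}e^{s\beta}\int_{D(\zeta_0;1/\sqrt n)}|f|^{2\beta}\,dA$, i.e. just redefine $s$ or absorb the extra factor. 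With these two lemmas available up to constants, the proof of Theorem \ref{DTH} in Section \ref{Sec3} goes through word for word, producing $\bfR_n^\beta(\zeta_0)\le C(u)\, n^2 e^{s\beta}e^{-cn\beta\delta(\zeta_0)^2}$, which is the first assertion of Theorem \ref{TH2}. Similarly, the proof of Theorem \ref{TH1}: the chain \eqref{zoo}--\eqref{Gugge} uses only Lemma \ref{pl3} and the two polynomial lemmas, and the extra bounded factors only shift $\lambda$, $\epsilon$, and the additive constant $\mu_n$ by amounts $O(\|u\|_\infty)$, which are harmless (they are absorbed into the $O$-constants and into $\mu_n\lesssim\log\log n+\beta$); and Theorem \ref{mprop} is the same modification.

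The one genuinely non-cosmetic point is Lemma \ref{lem3}, the partition-function estimate $\bfP_n^\beta(A_j^c)\le R_n^{-\beta h_* n}$: here the relevant inequality \eqref{nyeq} involves $Q$ directly, not $\ell_j$, so \eqref{asymp} does not immediately apply. However, replacing $Q$ by $V_n = Q + u/n$ changes $\log|\zeta-\eta|^{-2}+V_n(\zeta)+V_n(\eta)$ by at most $2\|u\|_\infty/n$, so \eqref{nyeq} holds with $h_0$ replaced by, say, $h_0/2$ for $n$ large; and the ratio bound \eqref{joh}, $Z_{n-1}^\beta/Z_n^\beta\le Ce^{\beta h_2 n}$, survives because passing from $Q$ to $V_n$ multiplies every integrand $e^{-\beta H_n}$ by $e^{-\beta\sum_j u(\zeta_j)}\in[e^{-\beta n\|u\|_\infty},e^{\beta n\|u\|_\infty}]$, hence multiplies each $Z_n^\beta$ by a factor in that range, so the ratio changes by at most $e^{2\beta n\|u\|_\infty}$ — again just a shift of $h_2$. (One must only note that the convergence \eqref{eqc}, invoked for the lower bound $m_n$, is insensitive to the $u/n$ perturbation since $V_n\to Q$ locally uniformly and the droplet is unchanged, so Theorem \ref{joh1} of the appendix applies to $V_n$ as well.) Collecting these, Lemma \ref{lem3} holds for $V_n$ with a possibly smaller $h_*$, and the rest of Section \ref{Sec_proof} is unchanged. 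I expect the main obstacle to be purely bookkeeping: making sure that the finitely many multiplicative factors $e^{\pm\|u\|_\infty}$, $e^{\pm 2\beta n\|u\|_\infty}$ are each absorbed into the right object (a redefinition of $s$, a harmless shift of $\epsilon$ and $\mu_n$, or a smaller $h_*$/$c$) without disturbing the final bounds, and confirming that no step secretly used a finer structural property of $Q$ — such as a precise value of a boundary derivative — that the bounded perturbation would destroy; the derivative computations in Lemma \ref{lem1} are about $\Qeff$, which is defined from the obstacle problem and is literally the same for $Q$ and $V_n$, so that worry does not materialize.
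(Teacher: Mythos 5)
Your proposal is correct and follows essentially the same route as the paper, whose own argument is a brief remark transferring everything through the pointwise equivalence \eqref{asymp} between $f=qe^{-nV_n/2}$ and $\tilde f=qe^{-nQ/2}$, noting that the exact identities of Lemma \ref{pl3} and \eqref{inter} hold verbatim for the perturbed ensemble and that the bounded multiplicative factors are absorbed into $\mu_n$ and the $O$-constants. You in fact supply more detail than the paper does — notably the verification that Lemma \ref{lem3} survives the perturbation via the $e^{O(\beta n\|u\|_\infty)}$ distortion of the partition functions, and the correct observation that Lemmas \ref{lem2} and \ref{pl2} must be applied to $\tilde f$ (since $u$ need not be smooth) and then transferred — all of which is sound.
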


\begin{proof}[Remark on the proof]
In view of \eqref{asymp} we have for each measurable subset $W\subset\C$ and each $j$, $1\le j\le n$, that $Y_{j,W}\asymp \tilde{Y}_{j,W}$ where
$$Y_{j,W}=\int_W |\ell_j|^{2\beta}\, dA\qquad \text{and}\qquad
\tilde{Y}_{j,W}=\int_W |\tilde{\ell}_j|^{2\beta}\, dA.$$ The identity $\bfE_n^\beta(\1_U(\zeta_j)\cdot Y_{j,W})=|U|\bfP_n^\beta(\{\zeta_1\in W\})$ holds as before,
and likewise $\tilde{\bfE}_n^\beta(\1_U(\zeta_j)\cdot \tilde{Y}_{j,W})=|U|\cdot \tilde{\bfP}_n^\beta(\{\zeta_1\in W\})$, where tildes are used to indicate the unperturbed distribution
(with respect to potential $Q$).

Using this, it is easily seen that we lose at most a constant factor when adding the perturbation $u/n$. In Theorem \ref{TH1} the logarithm of that factor can be absorbed in the constant $\mu$, and likewise our argument for Theorem \ref{mprop} goes through essentially unaltered.
\end{proof}

\begin{rem} The assumptions \ref{C} and \ref{E} on the underlying potential $Q$ are automatic from the others if we assume that $Q$ is real-analytic where it is finite.
This follows from Sakai's regularity theorem, see \cite{AKMW}. The perturbation $u/n$ may however be smooth.
\end{rem}

\begin{rem}
It is interesting to view the above result from the perspective of the self-improving method from the paper \cite{AM}. This method was developed with a partial intention to eventually obtain a rigorous proof
of full plane Gaussian field convergence of linear statistics of a Coulomb gas, but due to some technical challenges
it was only applied when $\beta=1$.
(The influential paper \cite{J} provides a somewhat analogous construction on $\R$, which was also applied to $\beta$-ensembles.)

One of the technical obstacles for extending the proof to cover $\beta$-ensembles involved having a good enough decay of the 1-point function
in the exterior of the droplet, which is a case where Theorem \ref{TH2} could be relevant.

The problem of proving Gaussian field convergence in the planar case was later subject of some attention, when the papers \cite{BBNY2,LS} appeared almost simultaneously, proposing different approaches to its solution. This notwithstanding, given a profound statement there are of course always questions of finding alternative explanations. (For the standard Ginibre ensemble, convergence to the Gaussian field was first proved in \cite{RV}.)
\end{rem}

\subsection{Comparison of Theorem \ref{TH1} to earlier results} \label{comp}
Suppose that $\beta=1$, $Q$ is radially symmetric, and $S$ is a disc of radius $R$. In this case we have recognized $R+D_n$ as, essentially,
the spectral radius of a matrix picked randomly from a certain normal matrix ensemble. The distribution of this spectral radius was worked out by
Rider \cite{R} for the Ginibre ensemble (the potential $Q=|\zeta|^2$)
and Chafa\"{i} and P\'{e}ch\'{e} for more general radially symmetric potentials
\cite{CP}.


For a detailed comparison, we introduce the random variables
$$\omega_n=\sqrt{4n\gamma_nc_0}\left(D_n-\sqrt{\frac {\gamma_n}{4nc_0}}\right),\qquad (c_0=\Lap Q(R)).$$
The results in \cite{CP,R} imply that $\omega_n$ converges in distribution to the standard Gumbel distribution as $n\to\infty$, where
$$\gamma_n=\log(n/2\pi)-\log\log n+\log(R^2c_0).$$

(We may recall here that a random variable $X$ is said to have a standard Gumbel distribution if its distribution function is
$\mathbb{P}(X\le t)=\exp(-\exp(-t)).$)

Natural generalizations of these results for ensembles with various types of ``boundary confinements'' are given in the papers \cite{AKS,S}.

\smallskip

The theorems of Rider and Chafa\"{i}-P\'{e}ch\'{e} can be said give a kind of two-dimensional analogue to the well-known convergence to the Tracy-Widom distribution
for the top eigenvalue in Hermitian random matrix theory \cite{ABD,TW}. Numerical evidence in the recent paper \cite{CF} indicates that some similar kind of law might
hold for the $\beta$-Ginibre ensemble.
In this connection, it is interesting to recall that a very precise asymptotic for the tail of the Tracy-Widom $\beta$-distribution for certain one-dimensional ensembles was worked out by Dumaz and Vir\'{a}g in the paper \cite{DV}.

\smallskip

To further compare with our Theorem \ref{TH1} we observe that as $n\to\infty$
\begin{equation}\label{c}\sqrt{\frac {\gamma_n}{4nc_0}}\sim \frac 1 {2\sqrt{c_0}} \sqrt{\frac {\log n}{n}}.\end{equation}
This is of the same order of magnitude ($\sqrt{\frac {\log n}n}$) as our present bound in Theorem \ref{TH1}. Incidentally, we see that our value for the constant $A$ of
proportionality in \eqref{loca} can be improved by a factor $1/2$ in this case.

\smallskip

The low temperature regime when $\beta\asymp \log n$ was studied in \cite{A}. In such a setting, our present results show that the gas is effectively localized to
a microscopic neighbourhood of $S$, \ie, to a neighbourhood of the form
$$\{\zeta;\, \delta(\zeta)\lesssim n^{-1/2}\}.$$
This is used in \cite{AR} to analyze low temperature Coulomb systems.

\smallskip

Due to limitations of our methods, we do not seem to quite reach up to high temperatures of the magnitude $\beta\asymp 1/n$ here.
This kind of regime is however studied, in a suitably adapted setting, in the recent paper \cite{AB}.


\appendix

\section{Johansson's convergence theorem}
In this appendix, we present relevant details about Johansson's convergence theorem \cite[Theorem 2.1]{J} in the planar setting, with varying $\beta$'s. As we shall
see, the arguments in \cite{J} and \cite[Appendix A]{HM} carry through (with some slight extra care) provided that $\beta$ is not too small.


Given a confining potential $Q$ it is convenient to introduce the kernel
$$L_Q(\zeta,\eta):=\log\frac 1 {|\zeta-\eta|}+\frac 1 2 (Q(\zeta)+Q(\eta)).$$

If $\mu$ is a compactly supported Borel measure on $\C$, the logarithmic $Q$-energy can be written as
\begin{equation} \label{logE}
I_Q[\mu]=\iint L_Q(\zeta,\eta)\, d\mu(\zeta)\, d\mu(\eta)=\iint_{\C^2}\log \frac 1 {|\zeta-\eta|}\, d\mu(\zeta)\, d\mu(\eta)+\mu(Q).\end{equation}

Let $\sigma=\Delta Q\cdot \1_S\, dA$ be the equilibrium measure in external potential $Q$, i.e., the minimizer among compactly supported unit charges of the functional \eqref{logE}.
The minimum value of the weighted energy
\begin{equation}\label{robin}
\gamma(Q):=I_Q[\sigma]
\end{equation}
is called the ``modified Robin constant'' in external potential $Q$.

It is pertinent to recall that the assumption \eqref{gro} implies that there are constants $C\in\R$ and $k>1$ such that
\begin{equation}\label{AS4}
Q(\zeta)+C\ge k\log(1+|\zeta|^2),\qquad \zeta\in \C.
\end{equation}

To each configuration $\{\zeta_j\}_1^n$ we associated the empirical measure $\mu_n=\frac 1 n\sum_{j=1}^n\delta_{\zeta_j}$. This measure has infinite $Q$-energy, and we
use as a substitute the discrete energy
\begin{align}\label{discrete}I^\sharp_{Q}[\mu_n]
&:=\frac 1 {n(n-1)}\sum_{j\ne k}L_Q(\zeta_j,\zeta_k)=\frac 1 {n(n-1)}\sum_{j\ne k}\log\frac 1 {|\zeta_j-\zeta_k|}+\mu_n(Q) .
\end{align}
This is closely connected to the Hamiltonian $\Ham_n$ in \eqref{hami}, namely we have
$$\Ham_n=\sum_{j\ne k}L_Q(\zeta_j,\zeta_k)+\sum_{j=1}^n Q(\zeta_j)=n(n-1)I_Q^\sharp[\mu_n]+\sum_{j=1}^n Q(\zeta_j).$$

In the following, we consider $\{\zeta_j\}_1^n$ as a random sample from the Gibbs measure associated with the potential $Q$ and
denote by $\bfR_n^\beta$ the 1-point function.

The following ``Johansson type'' theorem, which partly generalizes \cite[Theorem 2.1]{J} and \cite[Theorem 2.9]{HM} to a case of $n$-dependent $\beta$'s, is the main result of this appendix.

\begin{thm} \label{joh1} Let $\beta=\beta_n$ be a sequence such that $n\beta_n\to\infty$ as $n\to\infty$. Then for any continuous and bounded function $f$ on $\C$ we have
$$\frac 1 n\int_\C f\cdot \bfR_n^{\beta_n}\, dA\to\sigma(f),\qquad n\to\infty.$$
\end{thm}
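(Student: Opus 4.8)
The plan is to adapt Johansson's large-deviation argument for the empirical measure, paying attention to where $\beta$ enters so that the conclusion survives as long as $n\beta_n\to\infty$. First I would recast the statement as a statement about the expected empirical measure: writing $\mu_n=\frac1n\sum\delta_{\zeta_j}$, the quantity $\frac1n\int f\bfR_n^{\beta_n}\,dA$ equals $\bfE_n^{\beta_n}[\mu_n(f)]$, so it suffices to show $\bfE_n^{\beta_n}[\mu_n(f)]\to\sigma(f)$ for every bounded continuous $f$, and by a standard approximation argument it is enough to prove $\mu_n\to\sigma$ in probability in the weak-* topology (tightness plus identification of the limit), since boundedness of $f$ controls the contribution of the — exponentially unlikely — event that mass escapes to infinity.

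The core is an energy estimate. Using the representation $H_n=n(n-1)I_Q^\sharp[\mu_n]+\sum Q(\zeta_j)$ together with the lower bound \eqref{AS4}, one sees that $Z_n^\beta$ is finite and, more importantly, that for any configuration the discrete energy $I_Q^\sharp[\mu_n]$ is bounded below. The classical move (as in \cite{J} and \cite[Appendix A]{HM}) is to compare $I_Q^\sharp[\mu_n]$ with the true energy $I_Q[\mu_n*\rho_\eps]$ of a mollification, incurring an error that is $O(\log(1/\eps))$ per particle plus a term controlling the diagonal; one then invokes the variational characterization of $\sigma$, namely $I_Q[\nu]\ge \gamma(Q)$ with equality only at $\nu=\sigma$, and a quantitative version giving $I_Q[\nu]-\gamma(Q)\gtrsim \|U^\nu-U^\sigma\|_{L^2}^2$ or an analogous strict-convexity gap. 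Feeding this into the Gibbs weight $e^{-\beta H_n}=e^{-\beta n(n-1)I_Q^\sharp[\mu_n]}e^{-\beta\sum Q(\zeta_j)}$ shows that configurations with $\mu_n$ far (in a suitable metric metrizing weak-* convergence) from $\sigma$ have probability at most $e^{-c\beta n^2 \cdot(\text{gap})}$ relative to a reference set of configurations near $\sigma$, while the latter set carries probability bounded below by a comparable exponential; the net exponent is of order $\beta n^2$ times a fixed positive gap, which tends to $-\infty$ precisely because $\beta n\to\infty$ (indeed one only needs $\beta n^2\to\infty$, which is weaker). The entropy/volume factors that enter when lower-bounding the probability of the good set contribute only $e^{O(n\log n)}$, and these are dominated by $e^{c\beta n^2}$ under the hypothesis $n\beta_n\to\infty$.

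The one subtlety compared with fixed $\beta$ is bookkeeping: in Johansson's original argument several error terms are multiplied by $1/\beta$, so if $\beta\to 0$ one must check they do not blow up. Concretely, after dividing the exponent by $\beta n^2$ one is comparing $I_Q^\sharp[\mu_n]$ with $\gamma(Q)$ up to errors of size $O\!\left(\frac{\log n}{n}\right)$ (from the mollification and the diagonal) plus $O\!\left(\frac{1}{\beta n}\right)$ or $O\!\left(\frac{\log n}{\beta n}\right)$ (from the entropy/volume factor divided by $\beta n^2$), so the relevant smallness condition is exactly $\beta n/\log n\to\infty$ — which is implied by, but is in fact the natural sharp form of, $n\beta_n\to\infty$ once one is a little careful. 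I would state the estimates with explicit constants, choose the mollification scale $\eps=\eps_n\to 0$ slowly (e.g. $\eps_n=n^{-1/4}$), and verify that all errors are $o(1)$ under the stated hypothesis.

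The main obstacle I anticipate is the uniformity-in-$\beta$ control of the lower bound for the probability of the "good'' event $\{\mu_n\approx\sigma\}$: one must exhibit an explicit product-type set of configurations (particles placed on a fine lattice inside $S$, say, or sampled from $\sigma$ convolved with a small Gaussian) whose empirical measures are uniformly close to $\sigma$ and whose $dA_n$-volume times $e^{-\beta H_n}$ is at least $e^{-C\beta n^2}$ with $C$ close to $\gamma(Q)$, with the combinatorial/volume loss being only $e^{O(n\log n)}$ and hence negligible against $e^{c\beta n^2}$ when $\beta n\gg\log n$. This is exactly the point where \cite[Appendix A]{HM} and \cite{J} do the work, and the claim is that "with some slight extra care'' their construction already has the required form; the task is to trace the $\beta$-dependence through their estimates and confirm no hidden factor of $1/\beta^2$ or similar appears. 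Once that is in hand, tightness is immediate from \eqref{AS4} (the Gibbs measure cannot place non-negligible mass at infinity, again because the escape of one particle costs $e^{+c\beta n\log n}$ in energy, which beats any polynomial entropy gain as soon as $\beta n\to\infty$), and the identification of every subsequential weak-* limit as $\sigma$ follows from the energy gap, completing the proof.
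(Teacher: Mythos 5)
Your overall strategy is the same as the paper's: an energy/large-deviation bound showing that $\{I_Q^\sharp[\mu_n]>\gamma(Q)+\epsilon\}$ has exponentially small probability, obtained by combining an upper bound on the Gibbs weight of such configurations with a lower bound on $Z_n$ coming from a mollification of $\sigma$, followed by tightness via \eqref{AS4} and identification of subsequential weak-* limits through uniqueness of the minimizer. However, there is a concrete error in your $\beta$-bookkeeping, and it matters precisely because the theorem assumes only $n\beta_n\to\infty$. You assert that the relevant smallness condition is $\beta n/\log n\to\infty$ and that this ``is implied by'' $n\beta_n\to\infty$; it is not (take $\beta_n=(\log\log n)/n$). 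The condition $\beta n/\log n\to\infty$ is exactly the hypothesis \eqref{ass0} of Theorem \ref{TH1}, which is strictly stronger than the hypothesis of Theorem \ref{joh1}, so an argument that genuinely needs it does not prove the stated theorem.

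The source of the spurious $\log n$ is your proposed lower bound for the probability of the good set via an explicit lattice-type configuration, whose volume/combinatorial loss is $e^{O(n\log n)}$. The paper avoids this entirely: in Lemma \ref{lem4} it lower-bounds $Z_n$ by Jensen's inequality against the product measure $\prod\varphi_\delta(\zeta_j)\,dA(\zeta_j)$ with $\varphi_\delta=\chi_\delta*\sigma$ a mollification at a scale $\delta$ \emph{independent of $n$}, so the entropy cost is only $n\int\varphi_\delta\log\varphi_\delta\,dA=O(n)$ with a constant depending only on $Q$ (in the limit $\delta\to0$ it is $\int_S\Delta Q\log\Delta Q\,dA$, finite by the standing assumptions). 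The resulting deviation bound (Lemma \ref{ldp}) is $e^{-\beta a n(n-1)/2+cn}$, whose exponent tends to $-\infty$ as soon as $a\beta n\to\infty$; one then lets $\epsilon=\epsilon_n\downarrow0$ slowly enough that $\epsilon_n\beta_nn\to\infty$. If you replace your lattice construction by this Jensen/relative-entropy lower bound, all error terms become $O(n)$ in the exponent and your argument closes under the stated hypothesis. A minor further remark: the quantitative convexity gap $I_Q[\nu]-\gamma(Q)\gtrsim\|U^\nu-U^\sigma\|_{L^2}^2$ you invoke is not needed; lower semicontinuity of the energy along weakly convergent subsequences plus uniqueness of the minimizer suffices to identify the limit.
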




To prove this theorem, we
fix a small $\epsilon>0$
and form the event
$$A(n,\epsilon)=\left\{I_{Q}^\sharp(\mu_n)\le \gamma(Q)+\epsilon\right\},$$
where (as always) $\mu_n=\sum_1^n\delta_{\zeta_j}$ is picked randomly with respect to $\bfP_n^{\beta_n}$.



\begin{lem} \label{ldp} Fix $a\ge 0$. Then there is a positive integer $n_0$ depending on $\epsilon$ but not on $a$ such that if $n\ge n_0$, then
$$\bfP_n^\beta(\{\mu_N\not\in A(n,\epsilon+a)\})\le e^{-\beta an(n-1)/2+cn}$$
where $c>0$ is a constant depending only on $Q$.
\end{lem}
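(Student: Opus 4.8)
The goal is a large–deviation upper bound for the event that the discrete energy $I_Q^\sharp(\mu_n)$ exceeds $\gamma(Q)+\epsilon+a$. The natural approach is the standard one for Coulomb-gas large deviations, adapted to keep track of $\beta=\beta_n$: write the probability of the bad event as a ratio of a constrained integral to the partition function $Z_n^\beta$, bound the numerator using the defining lower bound on $H_n$ on that event, and bound the denominator $Z_n^\beta$ from below by a near-optimal trial configuration. Concretely, on the complement of $A(n,\epsilon+a)$ one has $H_n \ge n(n-1)\big(\gamma(Q)+\epsilon+a\big)+\sum_j Q(\zeta_j)$, so
\begin{equation*}
\bfP_n^\beta(\{\mu_n\notin A(n,\epsilon+a)\}) \le \frac{1}{Z_n^\beta}\, e^{-\beta n(n-1)(\gamma(Q)+\epsilon+a)}\int_{\C^n} e^{-\beta\sum_j Q(\zeta_j)}\, dA_n,
\end{equation*}
and the last integral equals $\big(\int_\C e^{-\beta Q}\, dA\big)^n$, which by \eqref{AS4} is bounded by $e^{cn}$ for a $Q$-dependent $c$ (uniformly for $\beta$ bounded below, say $\beta\ge 1$; for small $\beta$ one argues as in \cite{HM,J}, splitting off a fixed fraction of the mass).

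**The lower bound on $Z_n^\beta$.** The key quantitative input is a lower bound of the shape $Z_n^\beta \ge e^{-\beta n(n-1)(\gamma(Q)+\epsilon/2)}\cdot e^{-c'n}$ valid for $n\ge n_0(\epsilon)$, with $n_0$ independent of $a$. This is obtained by restricting the integral in \eqref{pf} to small discs $D(x_k; r_n)$ around a well-chosen deterministic configuration $\{x_k\}_1^n$ that approximately equidistributes according to $\sigma$ — for instance the Fekete-like array used in \cite{HM} — so that $I_Q^\sharp$ evaluated near $\{x_k\}$ is within $\epsilon/2$ of $\gamma(Q)$ once $n$ is large. The radius $r_n$ must shrink (polynomially in $n$) fast enough that the interaction terms don't move, but the resulting volume factor only costs $e^{-c'n\log n}$ at worst; a more careful choice (or a layered decomposition as in \cite{J}) brings this down to $e^{-c'n}$, or one simply absorbs a $\log n$ loss into $\epsilon$ since $n_0$ may depend on $\epsilon$. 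Combining the two bounds,
\begin{equation*}
\bfP_n^\beta(\{\mu_n\notin A(n,\epsilon+a)\}) \le e^{-\beta n(n-1)(\epsilon/2 + a)}\cdot e^{cn},
\end{equation*}
and since $\epsilon/2\ge 0$ this is $\le e^{-\beta a n(n-1)/2 + cn}$ after possibly relabeling the constant $c$ and using $\beta n(n-1)\epsilon/2 \ge 0$ to drop the $\epsilon$-term (it only helps). The uniformity in $a$ is automatic because $a$ enters only through the explicit factor $e^{-\beta a n(n-1)}$ coming from the energy lower bound on the bad event; the trial-configuration estimate that produces $n_0$ involves only $\epsilon$.

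**Main obstacle.** The delicate point is the $\beta$-uniformity of the lower bound on $Z_n^\beta$ together with the $e^{cn}$ control of $\int e^{-\beta Q}$: for $\beta$ bounded away from $0$ both are routine, but the statement is phrased to also accommodate $\beta_n\to 0$ with $n\beta_n\to\infty$, and there the naive trial-configuration bound degrades. The remedy — essentially Johansson's device in \cite{J} — is to keep a positive fraction of the charges spread out on a fixed large compact set so that $\int e^{-\beta Q}$ over that region is $\gtrsim 1$ independently of small $\beta$, while the remaining charges sit near the Fekete array; the bookkeeping of the two families' mutual interaction is what requires "some slight extra care," as the excerpt itself puts it. I would carry this out exactly as in \cite[Appendix A]{HM}, checking at each step that the only $\beta$-dependence surviving is the benign prefactor, and that $n_0$ depends on $\epsilon$ alone, not on $a$.
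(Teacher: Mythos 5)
Your overall architecture is the same as the paper's: write the probability as a constrained integral divided by $Z_n^\beta$, bound the numerator using the energy lower bound on the bad event, and bound $Z_n^\beta$ from below by a near-optimal trial. However, the two steps you defer to \cite{HM,J} are exactly where the content of the lemma lies (uniformity for $\beta_n\to 0$ with $n\beta_n\to\infty$), and as written both fail in that regime. First, your numerator bound keeps each $Q(\zeta_j)$ with coefficient $1$, so the one-particle factor is $\int_\C e^{-\beta Q}\,dA$; by \eqref{AS4} this is finite only when $k\beta>1$, and it is $+\infty$ for small $\beta$, making the bound vacuous. The paper's fix is not a modification of the $Z_n$ lower bound but a convex combination of the defining inequality \eqref{con1} with the confinement inequality $I_n^\sharp[\mu_n]\ge \frac{c_1}{n}\sum_j Q(\zeta_j)-c_2$: sacrificing a fraction $\theta$ of the excess $a$ boosts the coefficient of each $\log(1+|\zeta_j|^2)$ to order $\theta c_1 n$, so the one-particle integral becomes $\int_\C(1+|\zeta|^2)^{-k\beta((n-1)\theta c_1+1)}\,dA\le 1$ once $\beta n$ is large; this trade is also the source of the factor $1/2$ in the exponent. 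Your phrase ``splitting off a fixed fraction of the mass'' gestures at this but does not execute it, and your own derivation of the displayed bound relies on the divergent integral.

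Second, your lower bound on $Z_n^\beta$ via discs of radius $r_n$ around a Fekete-type array costs a volume factor $r_n^{2n}=e^{-cn\log n}$, and your proposed remedy --- absorbing the resulting $e^{+cn\log n}$ into the $\epsilon$-term $e^{-\beta n(n-1)\epsilon/2}$ --- requires $\beta_n n\,\epsilon\gtrsim\log n$, which does not follow from $n\beta_n\to\infty$ alone (take $\beta_n=\log\log n/n$). It would also make the constant $c$ in the conclusion depend on $n$, contradicting the statement. The paper avoids this entirely in Lemma \ref{lem4} by applying Jensen's inequality against the mollified equilibrium density $\varphi_\delta=\chi*\sigma$: the cost is the entropy $\frac 1n\int\varphi\log\varphi\,dA$, which contributes only $e^{-Cn}$ to $Z_n$, with no factor of $\beta$ and no $\log n$. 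Your parenthetical ``layered decomposition as in \cite{J}'' points in the right direction, but without carrying out either the entropy argument or the $\theta$-interpolation the proof does not close under the stated hypotheses.
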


Our proof of Lemma \ref{ldp} is in two steps.

We start with the following ``entropy estimate'' for the partition function
$Z_n := Z_n^{\beta_n}(Q)=\int_{\C^n}e^{-\beta_n\Ham_n}\, dA_n.$ (The estimate is found for instance in \cite{HM,J}; cf. also \cite[Lemma 4.1]{CHM} for the corresponding statement in $\R^d$.)

\begin{lem} \label{lem4} There are constants $C_1$ and $C_2$  depending only on $Q$ such that for all $n$
$$\frac 1 {n^2}\log Z_n^{\beta_n}\ge -\beta_n(1-C_1/n)\gamma(Q)+C_2/n.$$
\end{lem}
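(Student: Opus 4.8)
The plan is to bound the partition function $Z_n^{\beta_n}=\int_{\C^n}e^{-\beta_n H_n}\,dA_n$ from below by restricting the integral to a well-chosen probability measure and applying Jensen's inequality, a standard ``entropy'' trick. First I would fix a reference probability density built from the equilibrium measure: since $\sigma = \Lap Q\cdot \1_S\,dA$ has a bounded density which may vanish or blow up mildly, the cleanest choice is a smoothed version $\sigma_n$, e.g.\ $\sigma_n = \sigma * \varphi_{r_n}$ for a suitable bump with $r_n\to 0$, or a simple mollification supported on a fixed neighbourhood $V$ of $S$; one needs $\sigma_n$ to have a density $g_n$ bounded above and below (so that its entropy $\int g_n\log g_n\,dA$ is $O(1)$ and its logarithmic energy is finite) and to satisfy $I_Q^\sharp$-type estimates matching $\gamma(Q)$ up to $O(1/n)$.

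Then I would write $Z_n^{\beta_n}=\int_{\C^n} e^{-\beta_n H_n}\,dA_n = \int_{\C^n} e^{-\beta_n H_n}\prod_j g_n(\zeta_j)^{-1}\,d\sigma_n^{\otimes n}$, and apply Jensen's inequality with respect to the probability measure $d\sigma_n^{\otimes n}$:
\begin{align*}
\frac{1}{n^2}\log Z_n^{\beta_n} \ge -\frac{\beta_n}{n^2}\int_{\C^n} H_n\,d\sigma_n^{\otimes n} - \frac{1}{n^2}\sum_{j=1}^n \int_{\C}\log g_n\,d\sigma_n.
\end{align*}
The second (entropy) term is $O(1/n)$ by boundedness of $g_n$, contributing the $C_2/n$. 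For the first term, $H_n = \sum_{j\ne k}L_Q(\zeta_j,\zeta_k) + \sum_j Q(\zeta_j)$, so $\int H_n\,d\sigma_n^{\otimes n} = n(n-1)\iint L_Q\,d\sigma_n\,d\sigma_n + n\,\sigma_n(Q) = n(n-1)I_Q[\sigma_n] + n\,\sigma_n(Q)$. Hence $-\frac{\beta_n}{n^2}\int H_n\,d\sigma_n^{\otimes n} = -\beta_n(1-1/n)I_Q[\sigma_n] - \frac{\beta_n}{n}\sigma_n(Q)$. Since $\sigma_n$ is compactly supported in $V$ and $Q$ is bounded there, $\sigma_n(Q)=O(1)$; and since $\sigma$ is the minimizer of $I_Q$, with $\sigma_n\to\sigma$ in a controlled way one gets $I_Q[\sigma_n] = \gamma(Q) + O(1/n)$ (choosing the mollification scale $r_n\asymp 1/n$, say, so the error from smoothing the logarithmic kernel is $O(1/n)$). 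Collecting terms and absorbing the $\beta_n\cdot O(1/n)$ into $-\beta_n(1-C_1/n)\gamma(Q)$ and the $O(1/n)$ pieces into $C_2/n$ gives the claim, with $C_1,C_2$ depending only on $Q$ (via $V$, the bounds on $\Lap Q$ on $V$, and the mollifier).

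The main obstacle is the construction of $\sigma_n$ and the verification that $I_Q[\sigma_n]=\gamma(Q)+O(1/n)$ with a constant depending only on $Q$: the equilibrium density $\Lap Q\cdot\1_S$ is only $C^{1,1}$ across $\d S$ and need not be bounded below globally, so one must either regularize carefully or, alternatively, invoke a known such estimate. Indeed, as the paper notes, this exact statement (in $\R^d$) is \cite[Lemma 4.1]{CHM} and appears in \cite{HM,J}; the honest route here is to cite those sources and merely indicate that the mollification argument above yields constants depending only on $Q$ and not on $\beta_n$, which is the only point where extra care beyond the literature is needed — and that care is purely that the term carrying $\beta_n$ is exactly $-\beta_n(1-C_1/n)\gamma(Q)$ while all genuinely new contributions are the $\beta_n$-free entropy terms of size $O(1/n)$.
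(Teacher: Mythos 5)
Your proposal is correct and is essentially the paper's argument: a lower bound for $Z_n$ obtained by Jensen's inequality against a product reference measure built from a regularization of the equilibrium measure, with the cross term producing $-\beta_n(1-1/n)I_Q[\,\cdot\,]$ and the entropy term producing the $O(1/n)$ contribution.

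The one point where you part ways with the paper is that you tie the mollification scale to $n$ ($r_n\asymp 1/n$), which is exactly what manufactures the ``main obstacle'' you then propose to outsource to \cite{CHM,HM,J}: a quantitative rate $I_Q[\sigma_n]=\gamma(Q)+O(1/n)$. The paper needs no such rate. Its Jensen bound
\[
\frac 1 {n^2}\log Z_n\ \ge\ -\beta I_{Q}[\fii]+\frac \beta n I_{Q}[\fii]-\frac \beta n\int Q\fii\, dA-\frac 1 n\int\fii\log\fii\, dA
\]
holds for \emph{every} fixed admissible density $\fii$, so one may fix $n$, insert $\fii=\fii_\delta=\sigma*\chi_\delta$, and let $\delta\to 0$ on the right-hand side alone. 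Only the qualitative convergence $I_Q[\fii_\delta]\to\gamma(Q)$ is required (quoted from \cite{HM}), while the other two terms converge by dominated convergence --- the densities $\fii_\delta$ are bounded by $\max_S\Lap Q$ uniformly in $\delta$ and have uniformly bounded supports --- to the explicit $Q$-dependent constants $\int_S Q\Lap Q\, dA$ and $\int_S\Lap Q\log\Lap Q\, dA$, which are then absorbed into $C_1$ and $C_2$. Two smaller remarks: your requirement that $g_n$ be bounded \emph{below} is not needed (an upper bound plus compact support already controls $\int g_n\log g_n$ in both directions, via Jensen for the lower bound); and if you insist on the scale $r_n\asymp 1/n$, the rate you want does hold by an elementary argument --- superharmonicity of $\log\frac 1{|\zeta|}$ shows mollification can only decrease the logarithmic energy, while $\sigma*\chi_r(Q)-\sigma(Q)=O(r^2)$ by $C^2$-smoothness of $Q$ near $S$ --- but it is an unnecessary detour.
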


\begin{proof}  Write
\begin{align*}
Z_n=\int_{\C^n} \exp\Big\{ -\beta  \sum_{j\ne k}L_{Q}(\zeta_j,\zeta_k)- \sum_{j=1}^n (\beta Q(\zeta_j)+\log \fii(\zeta_j)) \Big\} \prod_{j=1}^n \, \varphi(\zeta_j)\, dA(\zeta_j).
\end{align*}
where $\varphi\ge 0$ is any continuous compactly supported function with
$\int_{\C} \varphi \, dA=1$.

By Jensen's inequality,
\begin{align*}
\log Z_n &\ge \int_{\C^n} \Big\{ -\beta  \sum_{j\ne k}L_{Q}(\zeta_j,\zeta_k)- \sum_{j=1}^n (\beta Q+\log \fii)(\zeta_j) \Big\} \prod_{j=1}^n \, \varphi(\zeta_j)\, dA(\zeta_j)
\\
&= -\beta n(n-1)I_{Q}[\fii]- n \int_{\C} ( \beta \, Q+\log \varphi ) \, \varphi\, dA,
\end{align*}
with the understanding that $0\log 0=0$, and where we write $I_{Q}[\fii]$ in place of $I_{Q}[\fii\, dA]$.
This leads to
\begin{align}\label{babbel}
\frac 1 {n^2}\log Z_n
&\ge -\beta I_{Q}[\fii]+\frac \beta n I_{Q}[\fii]-\frac \beta n\int Q\varphi\, dA-\frac 1 n\int\varphi\log\varphi\, dA,
\end{align}

For small $\delta>0$ we let $\chi(\zeta):=\delta^{-2}\1_{D(0,\delta)}(\zeta)$ and define a function $\fii_\delta$ by the convolution
$$\fii_{\delta}(\zeta):=\chi*\sigma(\zeta)=\frac{\sigma(D(\zeta,\delta))}{\delta^2},
$$
where $\sigma=\Delta Q\cdot\1_S\, dA$ is the equilibrium measure.

As $\delta\to 0$ we have that $I_{Q}[\fii_{\delta}]\to I_{Q}[\sigma]=\gamma(Q)$; see \cite[pp. 870-871]{HM} for a careful proof of this.

Thus setting $\fii=\fii_\delta$ in \eqref{babbel} and letting $\delta\to 0$ we obtain
\begin{align*}\frac 1 {n^2}\log Z_n&\ge -\beta(1-n^{-1}) \gamma(Q)
-\frac \beta n\int_{S} Q\Delta Q\, dA-\frac 1 n \int_{S}\Delta Q\, \log \Delta Q\, dA.
\end{align*}
The finiteness of the two integrals appearing here follows from our assumptions on $Q$.
\end{proof}

\begin{proof}[Proof of Lemma \ref{ldp}] If $\mu_n\not\in A(n,\epsilon+a)$, then
\begin{equation}\label{con1}I_n^\sharp[\mu_n]\ge \gamma(Q)+\epsilon+a.\end{equation}

We next note that the assumption \eqref{AS4}
and the elementary inequality $|\zeta-\eta|^2\le (1+|\zeta|^2)(1+|\eta|^2)$ for all $\zeta,\eta\in\C$ imply
$L_{Q}(\zeta,\eta)\ge \frac {c_1} 2(Q(\zeta)+Q(\eta))-c_2,$
where $c_1=1-1/k>0$ and $c_2=C/k.$ This gives the inequality
\begin{equation}\label{con2}I_n^\sharp[\mu_n]\ge\frac {c_1}n\sum_{j=1}^n Q(\zeta_j)-c_2.\end{equation}

Now fix a (small) constant $\theta$, $0<\theta<1$ and take a convex combination of the inequalities \eqref{con1} and \eqref{con2}. We obtain
that for all $\mu_n\not\in A(n,\epsilon+a)$,
\begin{equation}\label{above}
I_n^\sharp[\mu_n]\ge (1-\theta) (\gamma(Q)+\epsilon+a)+\theta(\frac {c_1}n\sum_{j=1}^n Q(\zeta_j)- c_2),
\end{equation}
which implies (by use of the inequality \eqref{AS4}),
\begin{align}\Ham_n
 \label{key}
&\ge   n(n-1)(1-\theta)(\gamma(Q)+\epsilon+a)\\
\nonumber &+k((n-1)\theta c_1+1)\sum_{j=1}^n \log(1+|\zeta_j|^2)-n(\theta c_3n+c_4).
\end{align}

Consequently,
\begin{align*}
\int_{\C^n\setminus A(n,\epsilon+a)}&e^{-\beta \Ham_n}\, dA_n\le e^{-\beta n(n-1)(1-\theta)(\gamma(Q)+\epsilon+a)+\beta n(\theta c_3n+c_4)}\\
&\times \Big[\int_\C(1+|\zeta|^2)^{-k\beta((n-1)\theta c_1+1)}\, dA(\zeta)\Big]^n.
\end{align*}
Since
$\int_\C(1+|\zeta|^2)^{-\alpha}\, dA(\zeta)=\frac 1 {\alpha-1}$ for  $\alpha>1$, the integral in brackets is no larger than $1$ when $n$ is large enough, so
\begin{align*}\bfP_n^\beta(\{\mu_N\not\in A(n,\epsilon+a)\})&\le \frac 1 {Z_n}e^{-\beta n(n-1)(1-\theta)(\gamma(Q)+\epsilon+a)+\beta n(\theta c_3n+c_4)},\qquad n\ge n_0.
\end{align*}

We now observe that Lemma~\ref{lem4}
implies 
$$
Z_n\ge \exp(-n(n-1)\beta(\gamma(Q)+\epsilon)-c_5n).$$
Then for $n \ge n_0$,
$$
\bfP_n^\beta(\{\mu_N\not\in A(n,\epsilon+a)\}) \le e^{ \beta n(n-1)[ \theta(\gamma(Q)+c_3+\epsilon)-(1-\theta)a+o(1)  ]+c_5n   }.
$$
Finally we fix $\theta$ with $0<\theta<1/4$ such that
$\theta(\gamma(Q)+c_3+\epsilon)<a/4.$
Then for $n\ge n_0$
$$
\bfP_n^\beta(\{\mu_N\not\in A(n,\epsilon+a)\}) \le e^{ \beta n(n-1)[ a/4-(1-\theta)a  ] +c_5n  }\le e^{-\beta an(n-1)/2+c_5n}.
$$
\end{proof}

\begin{proof}[Proof of Theorem \ref{joh1}] It now suffices to recapitulate a standard argument, which can be found (for example) in \cite[p. 195]{J} in the linear case.

Fix a small $\epsilon>0$.
For each $n\ge n_0(\epsilon)$ we pick with large probability a configuration $\{\zeta_j\}_1^n$ with $\mu_n\in A(n,2\epsilon)$.
Indeed, by Lemma \ref{ldp} the probability for the complementary event is no larger than $e^{-\epsilon\beta n(n-1)/2+cn}$.

The measures $\mu_n$ are then ``tight'' by \eqref{AS4}, i.e., given any $m>0$ there is an $R>0$
such that $\mu_n(\C\setminus D(0,R))<m$ for all $n$. This means that we can extract weakly convergent subsequences (renamed $\mu_n$) converging weakly
to probability measures $\mu^\epsilon$ on $\C$, in the sense that $\mu_n(f)\to\mu^\epsilon(f)$ for each continuous and bounded function $f$.

Letting $\epsilon=\epsilon_n\downarrow 0$ along a suitable sequence, the measures $\mu^{\epsilon_n}$ converge weakly to a probability measure $\mu$ with $I_Q[\mu]\le \gamma(Q)$. This implies $\mu=\sigma$ by unicity of the equilibrium measure, see \cite[Theorem I.1.3]{ST}.  If the convergence $\eps_n\to 0$ is sufficiently slow that $\epsilon_n\beta_n n\to\infty$ as $n\to\infty$,
this happens with large probability, tending to $1$ as $n\to\infty$.

We have shown that with probability $1+o(1)$, every subsequence of the measures $\mu_n$ has a further subsequence converging weakly to $\sigma$, which shows that the full sequence $\mu_n\to\sigma$ weakly.

In particular, the (uniformly bounded) random variables $\mu_n(f)$ converge to $\sigma(f)$ in probability as $n\to\infty$, where $f$ is a continuous and bounded function.
Taking expectations we obtain $\bfE_n^\beta[\mu_n(f)]\to \sigma(f)$ as $n\to\infty$,
 i.e.,
$\frac1n \int_\C f \cdot \bfR_n^{\beta_n}\, dA=\bfE_n^{\beta_n}[\mu_n(f)]\to \sigma(f)$ as $n\to\infty$.
\end{proof}


\begin{thebibliography}{999}
\bibitem{ABD} Akemann, G., Baik, J., Di Francesco, P. (Eds.), \textit{The Oxford Handbook of Random Matrix Theory}, Oxford 2011.
\bibitem{AB} Akemann, G., Byun, S.-S., \textit{The High Temperature Crossover for General 2D Coulomb Gases.}, J. Stat. Phys. \textbf{175} (2019), no. 6, 1043-1065.
\bibitem{A} Ameur, Y., \textit{Repulsion in low temperature $\beta$-ensembles}, Commun. Math. Phys. \textbf{359} (2018), 1079-1089.
\bibitem{AM} Ameur, Y., Hedenmalm, H., Makarov, N., \textit{Ward identities and random normal matrices}, Ann. Probab. \textbf{43} (2015), 1157--1201.
\bibitem{AKM} Ameur, Y., Kang, N.-G., Makarov, N., \textit{Rescaling Ward identities in the random normal matrix model},
   Constr. Approx. \textbf{50} (2019), 63--127.
\bibitem{AKS} Ameur, Y., Kang, N.-G., Seo, S.-M., \textit{On boundary confinements for the Coulomb gas}, arxiv 1909.12403.
\bibitem{AKMW} Ameur, Y., Kang, N.-G., Makarov, N., Wennman, A., \textit{Scaling limits of random normal matrix processes at singular boundary points}, J. Funct. Anal. \textbf{278} (2020), 108340.
\bibitem{AR} Ameur, Y., Romero, J. L., \textit{The planar low temperature Coulomb gas: separation and equidistribution}, arxiv: 2010.10179.
\bibitem{BBLM} Balogh, F., Bertola, M., Lee, S.-Y., McLaughlin, K. D., \textit{Strong asymptotics of orthogonal polynomials with respect to a measure supported on the plane}, Comm. Pure Appl. Math. \textbf{68} (2015), 112-172.
\bibitem{BBNY} Bauerschmidt, R., Bourgade, P., Nikula, M., Yau, H.-T., \textit{Local Density for Two-Dimensional One-Component Plasma}, Commun. Math. Phys. \textbf{356} (2017), 189-230.
\bibitem{BBNY2} Bauerschmidt, R., Bourgade, P., Nikula, M., Yau, H.-T., \textit{The two-dimensional Coulomb plasma: quasi-free approximation and central limit theorem}, Adv. Theor. Math. Phys. \textbf{23}, 841-1002, (2019).
\bibitem{CFTW} Can, T., Forrester, P.J., T\'{e}llez, G., Wiegmann, P., \textit{Singular behavior at the edge of Laughlin states.} Phys.
Rev. B \textbf{89}, 235137 (2014).
   \bibitem{CMMO} Carroll, T., Marzo, J., Massaneda, X., Ortega-Cerd\`{a}, J., \textit{Equidistribution and $\beta$-ensembles.} Ann. Fac. Sci. Toulouse Math. (6) \textbf{27} (2018), 377-387.
   \bibitem{CF} Chafa\"{i}, D., Ferr\'{e}, G., \textit{Simulating Coulomb and Log-Gases with Hybrid Monte Carlo
Algorithms}, J. Stat. Phys. \textbf{174}, 692-714 (2019).
   \bibitem{CHM} Chafa\"{i}, D., Hardy, A., Ma\"{i}da, M., \textit{Concentration for Coulomb gases and Coulomb transport inequalities}, J. Funct. Anal. \textbf{275} (2018) 1447-1483
\bibitem{CP} Chafa\"{i}, D., P\'{e}ch\'{e}, S., \textit{A note on the second order universality at the edge of Coulomb gases on the plane}, J. Stat. Phys. \textbf{156} (2014), 368-383.
\bibitem{CZ} Chau, L.-L., Zaboronsky, O., \textit{On the Structure of Correlation Functions in the Normal
Matrix Model}, Commun. Math. Phys. \textbf{196}, 203-247 (1998).
\bibitem{DV} Dumaz, L., Vir\'{a}g, B., \textit{The right tail exponent of the Tracy-Widom $\beta$ distribution}, Ann. Inst. Henri Poincar\'{e} Probab. Stat. \textbf{49} (2013), no. 4, 915-933.
\bibitem{EF} Elbau, P., Felder, G., \textit{Density of Eigenvalues of Random Normal Matrices}, Commun. Math. Phys. \textbf{259} (2005), 433-450.
\bibitem{HM} Hedenmalm, H., Makarov, N., \textit{Coulomb gas ensembles and
Laplacian growth}, Proc. London. Math. Soc. \textbf{106} (2013), 859--907.
\bibitem{J} Johansson, K., \textit{On fluctuations of eigenvalues of
random Hermitian matrices}, Duke Math. J. \textbf{91} (1998),
151--204.
\bibitem{LS} Lebl\'{e}, T., Serfaty, S., \textit{Fluctuations of two-dimensional Coulomb gases}, Geom. Funct. Anal. \textbf{28} (2018), no. 2, 443-508.
\bibitem{R} Rider, B., \textit{A limit theorem at the edge of a non-Hermitian random matrix ensemble}, J. Phys. A. \textbf{36} (2003), 3401-3409.
\bibitem{RV} Rider, B., Vir\'{a}g, B., \textit{The noise in the circular law and the Gaussian free field}, Int. Math. Res. Not. IMRN 2007, no. 2, Art. ID rnm006, 33 pp.
\bibitem{ST} Saff, E. B., Totik, V., \textit{Logarithmic potentials with
external fields}, Springer 1997.
\bibitem{S} Seo, S.-M., \textit{Edge scaling limit of the spectral radius for random normal matrix ensembles at hard
edge}, arXiv:1508.06591.
\bibitem{TW} Tracy, C. A., Widom, H., \textit{Level-spacing distributions and the Airy kernel}, Commun. Math. Phys. \textbf{159} (1994), 151-174.
\bibitem{ZW} Zabrodin, A., Wiegmann, P., \textit{Large $N$ expansion for the 2D Dyson gas}, J. Phys.
A: Math. Gen. \textbf{39} (2006), 8933-8964.
\end{thebibliography}
\end{document}